\colorlet{darkred}{red!90!black}
\theoremstyle{plain}
\newtheorem{theorem}{Theorem}[section]
\newtheorem{corollary}[theorem]{Corollary}
\newtheorem{lemma}[theorem]{Lemma}
\newtheorem{proposition}[theorem]{Proposition}
\newenvironment{customthm}[1]
  {\innercustomthm}
  {\endinnercustomthm}
\theoremstyle{remark}
\theoremstyle{definition}
\newtheorem{remark}[theorem]{Remark}
\newtheorem*{remark*}{Remark}
\newtheorem{assumption}[theorem]{Assumption}
\newtheorem{definition}[theorem]{Definition}
\numberwithin{equation}{section}
  \newcommand{\R}{{\mathbb{R}}}
  \newcommand{\Rd}{{\mathbb{R}^d}}
  \newcommand{\E}{\mathbb{E}}
  \newcommand{\caa}{{\mathcal A}}
  \newcommand{\cbb}{{\mathcal B}}
  \newcommand{\sbb}{{\mathscr{B}}}
  \newcommand{\cff}{{\mathcal F}}
  \newcommand{\cgg}{{\mathcal G}}
  \newcommand{\cjj}{{\mathcal J}}
  \newcommand{\C}{{\mathcal  C}}
  \newcommand{\cpp}{{\mathcal  P}}
  \newcommand{\css}{{\mathcal S}}
  \newcommand{\cmm}{{\mathcal M}}
  \newcommand{\cnn}{{\mathcal N}}
  \newcommand{\cxx}{{\mathcal X}}
  \newcommand{\supp}{\mathrm{supp}\,}
  \renewcommand{\P}{{\mathbb P}}
  \newcommand{\bes}{{\mathcal B}}
  \newcommand{\wei}[1]{{\langle#1 \rangle}}
\newcommand\p{\mathfrak{p}}
\newcommand\A{\mathcal A}
\newcommand{\1}{\mathbf{1}}
\newcommand{\tand}{\quad\text{and}\quad}
\newcommand{\hs}{{H}}
\newcommand{\les}{\lesssim}
\title{Stochastic sewing in Banach space}
\author[K. L\^e]{Khoa L\^e}
\address{TU Berlin}
\email{le@math.tu-berlin.de}
\subjclass[2020]{Primary 60H99,  46N30; secondary 60H50}
\keywords{stochastic sewing,  stochastic regularization, martingale type, local time}
\begin{document}
\begin{abstract} A stochastic sewing lemma which is applicable for  processes taking values in Banach spaces is introduced. Applications to additive functionals of fractional Brownian motion of distributional type are discussed.
\end{abstract}
\maketitle
\section{Introduction} 
\label{sec:introduction}
	The sewing lemma is an instrumental and versatile tool originated from Lyons' theory of rough paths \cite{MR1654527}.
	A specific case of the lemma can be traced back at least to the work \cite{young} of Young on the Riemann--Stieltjes integrals. Lyons utilizes Young's argument in \cite{MR1654527} to show unique extension of almost rough paths. In \cite{MR2091358}, Gubinelli  gives a general statement (Proposition 1 therein) of what we call the sewing lemma, his proof uses some elements from geometric measure theory. 
	The connection with abstract Riemann sums and the proof based on dyadic partitions, which we will adopt and develop upon herein, are due to Feyel and  de La Pradelle in \cite{MR2261056}. The broadly accepted name ``sewing lemma'' seemingly appears first in \cite{MR2261056}.

	The sewing lemma is purely analytic and provides quantifying estimates on (rough) integrals. The stochastic sewing lemma introduced by the author in \cite{MR4089788} is an extension of the sewing lemma which takes into account stochastic cancellations. 
	(See also \cite{MR4174393}*{Section 4.6} for a brief introduction on the stochastic sewing lemma.)
	Since its introduction, the stochastic sewing lemma has caught some attention and led to new interesting applications; ranging from regularization by noise problems \cites{ABLM,doi:10.1142/S0219493721400104,g2020regularisation,harang2021regularity}, numerical methods for stochastic differential equations \cites{butkovsky2021approximation,dareiotis2021quantifying,le2021taming}, rough stochastic differential equations \cite{FHL21} to averaging principle with fractional dynamics \cites{MR4124526,li2020slowfast}.
	While the sewing lemma is applicable for processes in any Banach spaces, its stochastic version from \cite{MR4089788} is only applicable for stochastic processes in $\Rd$. 
	The main purpose of this article is to extend the stochastic sewing lemma for stochastic processes taking values in any Banach spaces. Our result presented herein reconciles the two sewing lemmas while at the same time, opens up new possibilities for applications. 
	We provide one example on the spatial Besov regularity of additive functionals of fractional Brownian motion of the type $\int_0^1 f_r(B_r+\cdot)dr$ where $f$ is a time-dependent distribution on $\Rd$.  
	Closely related results along this direction include Berman's condition (\cite{MR556414}*{Theorem 28.1}) and the work \cite{doi:10.1142/S0219493721400104} in which spatial Sobolev regularity in $W^\alpha_2(\Rd)$ of local times of Gaussian processes are considered. We are able to deal with generic time dependent distributions on Besov scales with integrability indices in $(1,\infty)$.

	\smallskip
	Let us discuss on further detail.	The sewing method concerns about the convergence of the abstract Riemann sums
	\begin{align}\label{abs.rie}
		\sum_{[u,v]\in \pi}A_{u,v}
	\end{align}	
	where $A$ is a map from the simplex $\Delta:=\{(s,t)\in[0,T]^2:s\le t\}$ to a Banach space $(\cxx,|\cdot|_\cxx)$ and $\pi$ is a generic partition of $[0,T]$. We think of $A_{s,t}$ as a generalized  increment over the time interval $[s,t]$. For general Banach spaces, the sewing lemma gives  the following sufficient condition for the convergence of \eqref{abs.rie} in $\cxx$: there exist positive constants $\varepsilon,C$ and a control $w$ such that
	\begin{equation}\label{con.gub}
		|\delta A_{s,u,t}|_\cxx\le Cw(s,t)^{1+\varepsilon} \quad\forall 0\le s\le u\le t\le T,
	\end{equation}
	where $\delta A_{s,u,t}=A_{s,t}-A_{s,u}-A_{u,t}$.
	Hereafter, a control $w$ is a continuous function $w:\Delta\to[0,\infty)$ such that
	\begin{equation}
		w(s,u)+w(u,t)\le w(s,t) \quad\forall s\le u\le t.
	\end{equation}
	The sewing lemma also provides bounds on the limiting object. Let $\A_t$ denote the limit of Riemann sums \eqref{abs.rie} among partitions $\pi$ of $[0,t]$. Under \eqref{con.gub}, one has
	\begin{align}
		|\delta \A_{s,t}-A_{s,t}|_\cxx\le C w(s,t)^{1+\varepsilon} \quad\forall (s,t)\in \Delta
	\end{align}
	for some constant $C$, where $\delta \A_{s,t}=\A_t-\A_s$. 
	
	When $\cxx=[L^m(\Omega,\cff,(\cff_t),\P)]^d$ for some filtered probability space $(\Omega,\cff,(\cff_t),\P)$ and integers $d\ge1$, $m\ge2$, the stochastic sewing lemma from \cite{MR4089788} gives the following sufficient condition for the convergence of \eqref{abs.rie} in $[L^m(\Omega)]^d$: $A_{s,t}$ is $\cff_t$-measurable and there are positive constants $\varepsilon,C$ such that
	\begin{align}\label{con.kle}
		\|\delta A_{s,u,t}\|_{L^m(\Omega)}\le C|t-s|^{\frac12+\varepsilon}\tand\|\E(\delta A_{s,u,t}|\cff_s)\|_{L^m(\Omega)}\le C|t-s|^{1+\varepsilon}
	\end{align}
	for every $0\le s\le u\le t\le T$. The corresponding estimates on $\A$ supplied by the stochastic sewing lemma are
	\begin{align}
		\|\delta\A_{s,t}-A_{s,t}\|_m\le C|t-s|^{\frac12+\varepsilon}
		\tand
		\|\E\left(\delta\A_{s,t}-A_{s,t}|\cff_s\right)\|_m\le C|t-s|^{1+\varepsilon}.
	\end{align}

	When $w(s,t)=t-s$ and $\cxx=[L^m(\Omega,\cff,(\cff_t),\P)]^d$, one can compare the two results: \eqref{con.kle} requires less regularity on $\delta A$ but instead impose adaptiveness and an additional regularity condition on the conditional quantity $\E(\delta A_{s,u,t}|\cff_s)$. When $A$ is deterministic, \eqref{con.kle} evidently deduces to \eqref{con.gub}. 
	On the other hand, the stochastic sewing lemma  is restricted to the space $[L^m(\Omega,\cff,(\cff_t),\P)]^d$ whereas the sewing lemma applies for general Banach spaces. 
	Additionally, a curious feature of the stochastic sewing lemma is the exponent $1/2$ in \eqref{con.kle}. 
	These differences are due to the upper bound in the Burkholder--Davis--Gundy (BDG) inequality in $\Rd$ used in \cite{MR4089788}.

	\smallskip
	The current article gives an extension of the stochastic sewing lemma when $\cxx$ takes the form $L^m((\Omega,\cff,(\cff_t),\P);V)$ for some separable Banach space $(V,|\cdot|_V)$. 
	The space $L^m((\Omega,\cff,(\cff_t),\P);V)$ contains all $V$-valued random variables which have finite $m$-th moment.
	To state the result, however, we need to introduce another feature on the geometric properties of $V$.

	As explained previously, the exponent $1/2$  in \eqref{con.kle} is tied to the upper bound in the BDG inequalities in $\Rd$. This inequality estimates moments of a martingale by the moments of its quadratic variation. While BDG inequality is easily extended to Hilbert spaces, such inequality is not available for general Banach spaces.
	Nevertheless, it turns out that many Banach spaces possess a certain variant BDG upper bound. Such property is described by the concept of \textit{martingale type}, a  feature originated from the study of geometry of Banach spaces.

	Let $(f_h)_{h=0}^N$ be a sequence of integrable $V$-valued random variables and $\{\cff_h\}_{h=0}^N$ be a non-decreasing sequence of subfields of $\cff$. We say that $(f_h,\cff_h)_{h=0}^N$, or simply $(f_h)_{h=0}^N$, is a martingale if
	\[
		\E(df_{h+1}|\cff_h)=0 \quad\forall h\ge0, \quad\text{where} \quad df_h=f_h-f_{h-1}.
	\]
	When $f_h$ is $L_m$-integrable for every $h$, we say that $(f_h)_{h=0}^N$ is an $L_m$-integrable martingale.

	\begin{definition}\label{def.marttype}
		We say that $V$ has \emph{martingale type} $\p\in[1,2]$ if for some $m\in(1,\infty)$, there exists a constant $C_{m,\p,V}$ such that
		\begin{equation}\label{ineq.type}
			\|f_N\|_{L_m(V)}\le C_{m,\p,V}\left\|\left(|f_0|_V^\p+\sum_{n=1}^N|df_n|_V^\p\right)^{1/\p}\right\|_{L_m}
		\end{equation}
		for every $L_m$-integrable $V$-valued martingale $(f_n)_{n=0}^N$.
	\end{definition}
	We note that every Banach space has trivial martingale type $\p=1$, in which \eqref{ineq.type} follows from the triangle inequality. Hence, if $\p\neq1$, we say that $V$ has non-trivial martingale type. As an example, the Besov space $\bes^\alpha_{p,q}(\Rd)$, $\alpha\in\R$, $(p,q)\in[1,\infty)$ has martingale type $\p=\min(2,p,q)$, see \cref{prop.ex.type} for more examples.
	\begin{customthm}{A}\label{thm.meta}
		Let $w$ be a control, $m\ge2$ and $(V,|\cdot|_V)$ be a Banach space with martingale type $\p$. Let $A:\Omega\times \Delta\to V$ be a measurable map such that $A_{s,t}$ is $\cff_t$-measurable for every $(s,t)\in \Delta$.
		Suppose that there are positive constants $\varepsilon,C$ such that
		\begin{align}\label{con.meta}
			\||\E(\delta A_{s,u,t}|\cff_s)|_V\|_{L^m(\Omega)}\le Cw(s,t)^{1+\varepsilon}
			\tand
			\||\delta A_{s,u,t}|_V\|_{L^m(\Omega)}\le Cw(s,t)^{\frac1\p+\varepsilon}
		\end{align}
		for every $0\le s\le t\le T$.
		Then the Riemann sums \eqref{abs.rie} converge in $L^m(\Omega;V)$.		
	\end{customthm}
	\cref{thm.meta} reconciles the sewing lemma and its stochastic version from \cite{MR4089788}.
	Indeed, it is evident that when $A$ is deterministic, condition \eqref{con.meta} deduces to \eqref{con.gub} and hence, \cref{thm.meta} deduces to the sewing lemma in its full generality. 
	On the other hand, the space $\Rd$ has martingale type $\p=2$ and hence \cref{thm.meta} deduces to the stochastic sewing lemma from  \cite{MR4089788} when $V=\Rd$.
	The exponent $1/2$ in \eqref{con.kle} is revealed in \eqref{con.meta} as $1/\p$, where $\p$ is the martingale type of $V$.  
	\cref{thm.meta} follows from a more general result, \cref{prop.VSSL} herein.

	\smallskip
	As an application, we apply the stochastic sewing lemma in Banach spaces to study a class of additive functionals of fractional Brownian motion. To be more precise, let $B$ be a fractional Brownian motion in $\Rd$ and $f$ be a time-dependent distribution in the space $L^\theta([0,T];\bes^\alpha_{p,1}(\Rd))$, $\alpha\in\R$, $\theta,p\in(1,\infty)$. Consider additive functionals of the type
	\begin{equation}
		(t,x)\mapsto I[f]_t(x):=\int_0^t f_r(B_r+x)dr.
	\end{equation}
	Such additive functional has intimate connections with the local times of fractional Brownian motion and its regularity properties are directly related to regularizing effect of fraction Brownian motion (\cite{MR3505229,MR4089788}).
	Two problems to be discussed are defining $I[f]$ and establishing its spatial regularity.
	Because $f_r$ is only a distribution for a.e. $r$, the composition $f_r(B_r+x)$ is not a priori well-defined, so is the functional $f\mapsto I[f]$.
	To overcome this issue, we define $f\mapsto I[f]$ as the continuous extension on space of smooth functions with respect to the topology generated by $L^\theta([0,T];\bes^\alpha_{p,1}(\Rd))$. 
	This means that whenever defined, for every sequence of smooth functions $(f^n)_n$ which converges to $f$ in $L^\theta([0,T];\bes^\alpha_{p,1}(\Rd))$, we have
	\begin{equation}\label{intro.limfn}
		\lim_n\int_0^t f^n_r(B_r+x)dr=I[f]_t(x)
	\end{equation}
	in an appropriate sense. 
	It turns out that the limit  in \eqref{intro.limfn} holds even when the spatial regularity is larger than $\alpha$. Hence, \eqref{intro.limfn} also quantifies the regularizing effect of $B$ and provides spatial regularity for $I[f]$, see \cref{thm.fbm.besov} herein.
	Although the method can be extended to other stochastic processes, as one has seen from the aforementioned applications of the stochastic sewing lemma, we restrict herein to fractional Brownian motion. An advantage is  that it keeps our presentation concise while at the same time exhibit the method in different scales by varying the Hurst parameter.

	\smallskip
	The structure of the paper is as follows. In \cref{sec:vector_valued_stochastic_processes}, we briefly summarize necessary facts on vector valued stochastic processes as well as the concept of martingale type. The stochastic sewing lemma in Banach spaces is stated and proved in \cref{sec:stochastic_sewing_lemma}. \cref{sec:additive_functionals_of_fractional_brownian_motion} discusses an application to additive functional of fractional Brownian motion. 
	\cref{sec:a_continuity_criterion_with_controls} presents an extension of Kolmogorov continuity criterion with controls which can be used to construct a continuous modification of the process constructed from \cref{thm.meta}. \cref{sec:besov_spaces} contains some auxiliary estimates.

	\smallskip

	We close the introduction with a list of notation.	
		$\css(\Rd)$ is the Schwartz space,	
		$\bes^\alpha_{p,q}(\Rd)$ is the Besov space, $W^\alpha_p(\Rd)$ is the fractional Sobolev space, $\C^\alpha(\Rd):=\bes^\alpha_{\infty,\infty}(\Rd)$ is the H\"older--Zygmund space (see \cite{bahouri} for precise definitions). $L^p=L^p(\Rd)$ is the Lebesgue space while $L_m=L^m(\Omega)$ is the moment space. Deterministic norms are denoted by $|\cdot|$. For example, the norms in $\bes^\alpha_{p,q}(\Rd)$ and $L^p(\Rd)$ are denoted by $|\cdot|_{\bes^\alpha_{p,q}}$ and $|\cdot|_{L^p}$ respectively. The norm in the moment space $L_m(\Omega)$ is denoted by $\|\cdot\|_m$. 
		For an $V$-valued random variable $X$, where $V$ is a Banach space, the moment norm of $X$ is denoted by $\|X\|_{V;m}:=\left\||X|_V\right\|_m$. 
		For each $q\in[1,\infty]$, $q'$ denotes the H\"older conjugate of $q$, defined by $\frac1q+\frac1{q'}=1$. We always use the convention that $1/\infty=0$. 
		The notation $\lesssim$ means $\le C$ for some universal constant $C>0$ whose value can change from one line to another.

	\section{Vector valued stochastic processes} 
	\label{sec:vector_valued_stochastic_processes}
		Throughout the article, $T>0$ is a fixed finite time and $(\Omega,\cff,\{\cff_t\}_{t\in[0,T]},\P)$ is a complete filtered probability space such that $\cff_0$ contains the $\P$-null sets. The expectation with respect to $\P$ is denoted by $\E$ while the conditional expectation with respect to $\cff_s$ is denoted by $\E_s$. 
		For a topological space $S$, the Borel $\sigma$-field on $S$ is denoted by  $\sbb(S)$. 

		Let $(V,|\cdot|_V)$ be a separable Banach space and $\sbb(V)$ be its Borel $\sigma$-field. 
		A \emph{$V$-valued random variable} $Y$ is a measurable map $Y:(\Omega,\cff)\to(V,\sbb(V))$. Since $V$ is separable, weakly measurable maps are strongly measurable (this is Pettis measurability theorem \cite{MR3617205}*{Theorem 1.1.20}) and we will henceforth always write ``measurable'' for ``strongly measurable''.

		For $m\in[1,\infty)$, we say that a $V$-valued random variable $Y$ is $L_m$-integrable if $\E(|Y|_V^m)$ is finite. When $m=1$, we simply say $Y$ is integrable. In this case, $\E Y$ is a well-defined element in $V$ and one has
		\[
			|\E Y|_V\le \E(|Y|_V).
		\] 
		The space of all $L_m$-integrable $V$-valued random variables is denoted by $L_m(V)$. As commonly practiced, the dependence on $\Omega$ is omitted. Nevertheless, when it is necessary to emphasize the stochastic basis, we write $L^m((\Omega,\cff,\{\cff_t\},\P);V)$ or $L^m(\Omega;V)$  for $L_m(V)$. The norm on $L_m(V)$ is defined by
		\[
			Y\mapsto \|Y\|_{V;m}:=\left(\E|Y|^m_V\right)^{1/m}.
		\]
		Let $\cgg$ be a sub $\sigma$-field of $\cff$ and $Y$ be a $V$-valued random variable. The conditional expectation of $Y$ with respect to $\cgg$, whenever exists, is a (unique) $V$-valued random variable $Z=\E(Y|\cgg)$ such that
		\[
			\E(Y\1_J)=\E(Z\1_J)\quad\forall J\in\cgg.
		\]
		If $Y$ is integrable, we have from \cite{MR3617205}*{Lemma 2.6.19} that $|\E(Y|\cgg)|_V\le\E(|Y|_V|\cgg)$.
		We also denote
		\[
			\|Y|\cgg\|_{V;m}:=\left[\E\left(|Y|^m_V|\cgg\right)\right]^{1/m}
		\]
		which is a $\cgg$-random variable.
		When $V=\R^k$ for some integer $k\ge1$, we simply write $\|\cdot\|_m$ and $\|\cdot|\cgg\|_m$ respectively for $\|\cdot\|_{\R^k,m}$ and $\|\cdot|\cgg\|_{\R^k,m}$. 
		For further information on integration and conditional expectation of Banach-valued random variables, we refer to \cite{MR3617205}*{Sections 1.2 and 2.6}.

		We recall \cref{def.marttype} of martingale type. Furthermore,  if \eqref{ineq.type} holds for some $m\in(1,\infty)$ then it holds for \emph{all} $m\in(1,\infty)$ (see \cite{MR3617205}*{Proposition 3.5.27}). Therefore, we can say that $V$ has martingale type $\p$ if \eqref{ineq.type} holds for every $m\in(1,\infty)$ and every $L_m$-integrable martingale $(f_n)_{n=0}^N$. 

				Although not being used, it worths noting the following relation between martingale type and smoothness of Banach spaces, due to Pisier.
				\begin{proposition}[{\cite[Corollary 4.22]{MR3617459}}] Let $V$ be a Banach space and $\p$ be in $[1,2]$. $V$ has martingale type $\p$ if and only if $V$ is $(\p,C)$-smooth for some constant $C$, i.e. there is an equivalent norm $|\cdot|$ on $V$ such that 
		\begin{align*}
			2^{-1}(|x+y|^\p+|x-y|^\p)\le |x|^\p+C^\p|y|^\p, \quad \forall x,y\in V.
		\end{align*}				 	
				\end{proposition}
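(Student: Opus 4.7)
My plan is to prove the two implications separately.

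\emph{Smoothness implies type.} Given an equivalent norm $|\cdot|$ on $V$ satisfying $\tfrac12(|x+y|^\p+|x-y|^\p)\le|x|^\p+C^\p|y|^\p$ and an $L_\p$-integrable martingale $(f_n)$, I would substitute $x=f_{n-1}$ and $y=df_n$ and take the conditional expectation $\E(\,\cdot\,|\cff_{n-1})$. Conditional Jensen's inequality together with $\E(df_n|\cff_{n-1})=0$ yields $\E(|f_{n-1}-df_n|^\p|\cff_{n-1})\ge|f_{n-1}|^\p$, so after cancellation
\begin{equation*}
\E(|f_n|^\p|\cff_{n-1})\le |f_{n-1}|^\p+2C^\p\E(|df_n|^\p|\cff_{n-1}).
\end{equation*}
Iterating in $n$ and taking unconditional expectation gives the martingale type $\p$ inequality in the norm $|\cdot|$ at level $m=\p$; the extension to all $m\in(1,\infty)$ and the transfer back to the original norm $|\cdot|_V$ via equivalence is then automatic, by the already cited \cite{MR3617205}*{Proposition 3.5.27}.

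\emph{Type implies smoothness.} This is the harder direction and the main obstacle. Following Pisier, I would introduce the functional
\begin{equation*}
N(x)^\p:=\sup\Bigl\{\E|f_N|_V^\p-K^\p\sum_{n=1}^N\E|df_n|_V^\p\Bigr\},
\end{equation*}
where the supremum ranges over all finite $V$-valued $L_\p$-integrable martingales on arbitrary filtered probability spaces with $f_0\equiv x$, and $K>C_{\p,\p,V}$ is fixed. Applying the type inequality to any competitor gives $N(x)^\p\le C_{\p,\p,V}^\p|x|_V^\p$, hence $N(x)$ is finite; the constant martingale $f_n\equiv x$ yields $N(x)\ge|x|_V$. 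To promote $N$ to an equivalent norm I would verify positive homogeneity by scaling the underlying martingale and absolute convexity by concatenating martingales on product spaces, possibly passing to the Minkowski functional of the convex hull of $\{x:N(x)\le 1\}$ if symmetry and subadditivity of $N$ itself are not immediate.

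The smoothness inequality for $N$ then follows by a concatenation argument. Given $x,y\in V$, pick near-extremizing martingales $(g_n)$ starting at $x+y$ and $(h_n)$ starting at $x-y$, and form a martingale starting at $x$ by first performing a Rademacher coin flip $\varepsilon\in\{\pm1\}$ to move to $x+\varepsilon y$ and then running $(g_n)$ or $(h_n)$ conditionally on $\{\varepsilon=\pm1\}$. Feeding this competitor into the supremum defining $N(x)^\p$, the terminal $\p$-moment equals $\tfrac12\E|g_{N}|_V^\p+\tfrac12\E|h_{N}|_V^\p$, while the sum of $\p$-moments of the differences decomposes into $|y|_V^\p$ from the first step plus $\tfrac12$ of each of the sums for $(g)$ and $(h)$; rearranging and passing to the supremum gives
\begin{equation*}
\tfrac12(N(x+y)^\p+N(x-y)^\p)\le N(x)^\p+K^\p|y|_V^\p,
\end{equation*}
which is $(\p,K)$-smoothness for $N$. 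The chief technical delicacy is the bookkeeping in the concatenation construction and ensuring the supremum is well-posed, which can be handled by restricting to canonical dyadic filtrations on $[0,1]$.
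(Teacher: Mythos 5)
The paper does not prove this proposition at all; it simply cites it from Pisier's monograph \cite{MR3617459}. So there is no ``paper's proof'' to compare against, and your two-direction argument should be judged on its own merits.

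Your forward direction (smoothness $\Rightarrow$ martingale type) is correct and is the standard argument: substituting $x=f_{n-1}$, $y=df_n$ into the $(\p,C)$-smoothness inequality, applying conditional Jensen to the convex map $z\mapsto|f_{n-1}-z|^\p$ to absorb the $|x-y|^\p$ term via $\E(|f_{n-1}-df_n|^\p\mid\cff_{n-1})\ge|f_{n-1}|^\p$, and telescoping gives the type inequality at $m=\p$; the extrapolation to all $m\in(1,\infty)$ and the passage between equivalent norms are routine and correctly cited.

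The reverse direction (type $\Rightarrow$ smoothness) has a genuine gap. The martingale-concatenation step correctly shows that the functional $N(x)=\sup\{\E|f_N|_V^\p-K^\p\sum\E|df_n|_V^\p\}^{1/\p}$ satisfies $2^{-1}(N(x+y)^\p+N(x-y)^\p)\le N(x)^\p+K^\p|y|_V^\p\le N(x)^\p+K^\p N(y)^\p$, and that $|x|_V\le N(x)\le C_{\p,\p,V}|x|_V$. But nothing in your construction shows $N$ is subadditive, and in fact the supremum construction is of the wrong sign to yield convexity: the concatenation argument only yields a \emph{lower} bound of the form $N\bigl(\tfrac{x+y}{2}\bigr)^\p\ge\tfrac12 N(x)^\p+\tfrac12 N(y)^\p-2^{-\p}K^\p|x-y|_V^\p$, which is the opposite of midpoint convexity. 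Your suggested repair --- take the Minkowski functional $\widehat N$ of $\mathrm{conv}\{N\le1\}$ --- produces an equivalent norm, but it does not obviously inherit the smoothness inequality: convexification gives $\widehat N\le N$, which helps the left-hand side of the smoothness inequality but hurts the right-hand side (one needs $\widehat N(x)^\p$ on the right, and $\widehat N(x)\le N(x)$ goes the wrong way). Reconciling convexity with smoothness is precisely the nontrivial heart of Pisier's renorming theorem; it requires a further idea (iterated renorming, duality with martingale cotype, or a more careful choice of functional) rather than a straightforward convex hull. As written, your sketch names the obstacle but does not overcome it.
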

		
		In our considerations, it is useful to observe that \eqref{ineq.type} also holds with conditional expectations. To be more precise, suppose that $(f_h)_{h=0}^N$ is an $L_m$-integrable $V$-valued martingale with respect to a filtration $\{\cff_h\}_{h=0}^N$ and $\cgg$ is a sub $\sigma$-field of $\cff_0$. Then
		\begin{equation}\label{ineq.cd.type}
			\|f_N|\cgg\|_{V;m}\le C_{m,\p,V}\left\|\left(|f_0|_V^\p+\sum_{n=1}^N|df_n|_V^\p\right)^{1/\p}\Bigg|\cgg\right\|_{L_m} \quad\textrm{a.s.}
		\end{equation}
		where $C_{m,\p,V}$ is the same constant in \eqref{ineq.type}. Indeed, let $G$ be a measurable set in $\cgg$. It is easily checked that $(f_h\1_G)_{h=0}^{N}$ is an $L_m$-integrable $V$-valued martingale. Hence, applying \eqref{ineq.type} gives
		\[
			\||f_N|_V\1_G\|_{L_m}\le C_{m,\p,V}\left\|\left(|f_0|_V^\p+\sum_{n=1}^N|df_n|_V^\p\right)^{1/\p}\1_G\right\|_{L_m}.
		\]
		Since this inequality holds for every $G$ in $\cgg$, one can apply \cite{MR3617205}*{Lemma 2.6.15} to obtain \eqref{ineq.cd.type}.

		Hereafter, the following assumption will be enforced.		
		\begin{assumption}\label{con.Vtype}
			$V$ has non-trivial martingale type $\p\in(1,2]$. 
		\end{assumption}
		As explained previously, \cref{con.Vtype} ensures that inequalities \eqref{ineq.type} and \eqref{ineq.cd.type} hold for \textit{every} $m\in(1,\infty)$, every $L_m$-integrable $V$-valued martingale $(f_h,\cff_h)_{h=0}^N$ and every subfield $\cgg$ of $\cff_0$.

		We collect a few known examples Banach spaces with non-trivial type $\p$.
		\begin{proposition}[Examples of Banach spaces with non-trivial type]\label{prop.ex.type} Let $p,q\in(1,\infty)$ and $\alpha\in\R$.
			\begin{enumerate}[(a)]
				\item\label{t1}  Every Hilbert space has martingale type $2$.
				\item\label{t2} Lebesgue space $L^p(\Rd)$ has martingale type $\p=\min\{2,p\}$.
				\item\label{t3} Besov space $\bes^\alpha_{p,q}(\Rd)$ has martingale type $\p=\min\{2,p,q\}$.
				\item\label{t4} Triebel--Lizorkin space $F^\alpha_{p,q}(\Rd)$ has martingale type $\p=\min\{2,p,q\}$.
				\item\label{t5} Sobolev space $W^\alpha_p(\Rd)$ has martingale type $\p=\min\{2,p\}$. 
			\end{enumerate}
		\end{proposition}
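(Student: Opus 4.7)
The plan is to handle (a) directly and reduce parts (b)--(e) to scalar Burkholder-type inequalities combined either with Fubini or with the Littlewood--Paley characterization of the spaces in question.

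For (a), orthogonality of martingale increments in a Hilbert space $V$ gives $\E|f_N|_V^2 = \E|f_0|_V^2 + \sum_{n=1}^N \E|df_n|_V^2$, which is \eqref{ineq.type} with $\p=2$ and constant one at $m=2$. The remark immediately after \cref{def.marttype}, together with Pisier's equivalence quoted above, propagates this to every $m\in(1,\infty)$. Type larger than $2$ is impossible (take a Rademacher sequence), so the exponent is sharp.

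For (b), I would split into $1<p\le 2$ and $p\ge 2$. When $1<p\le 2$, fix $x\in\R^d$: the sequence $(f_n(x))$ is a scalar martingale, and the Burkholder square-function inequality yields
\begin{align*}
\E|f_N(x) - f_0(x)|^p \le C_p \E \Bigl(\sum_n |df_n(x)|^2\Bigr)^{p/2} \le C_p \E \sum_n |df_n(x)|^p,
\end{align*}
where the second step uses $\|\cdot\|_{\ell^2}\le \|\cdot\|_{\ell^p}$ for $p\le 2$. Integrating in $x$ and applying Fubini gives the martingale-type-$p$ inequality at $m=p$, which then extends to all $m\in(1,\infty)$. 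When $p\ge 2$ one instead invokes the classical fact that $L^p$ is $2$-uniformly smooth (with the Pisier--Clarkson inequality $(|u+v|_{L^p}^2+|u-v|_{L^p}^2)/2\le |u|_{L^p}^2+(p-1)|v|_{L^p}^2$), from which martingale type $2$ follows via the proposition cited above, or via a direct induction on $N$.

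For (c)--(e) the strategy is to realize each space as a complemented subspace of an iterated vector-valued $L^p/\ell^q$ space, and then quote that martingale type descends to closed subspaces under isomorphism. Via a Littlewood--Paley decomposition $(\Delta_j)_{j\ge-1}$, the space $\bes^\alpha_{p,q}(\R^d)$ is isomorphic to a retract of $\ell^q_\alpha(L^p(\R^d))$, the space $F^\alpha_{p,q}(\R^d)$ to a retract of $L^p(\R^d;\ell^q_\alpha)$, and $W^\alpha_p=F^\alpha_{p,2}$ to a retract of $L^p(\R^d;\ell^2_\alpha)$. It therefore suffices to show that $L^r(\mu;V)$ has martingale type $\min\{r,\p\}$ whenever $V$ has martingale type $\p$ and $r\in(1,\infty)$; applied twice this yields martingale type $\min\{2,p,q\}$ for both $\ell^q(L^p)$ and $L^p(\ell^q)$, and $\min\{2,p\}$ for $L^p(\ell^2)$. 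The iteration step is proved exactly as in (b): for an $L^r(\mu;V)$-valued martingale, Fubini interchanges the $L^r(\mu)$-integration with the $\ell^{\min(r,\p)}$-sum of the increments, after which the type-$\p$ bound on $V$ is applied $\mu$-pointwise.

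The step I expect to be most technical is this last iteration, since one must apply the case-(b) argument to an already vector-valued martingale rather than to a scalar one, and the two regimes $r\ge\p$ and $r\le\p$ require separate elementary norm comparisons (the embedding $\ell^\p\hookrightarrow\ell^r$ versus $\ell^r\hookrightarrow\ell^\p$). No new martingale-theoretic input is needed beyond (a) and (b), and indeed all of (a)--(e) appear in this form in the general theory of martingales in UMD/type-$\p$ spaces developed in \cite{MR3617205}*{Chapters 3--4}, which I would cite rather than redo in full.
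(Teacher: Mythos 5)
Your proof is correct, but it takes a genuinely different route from the paper's. The paper handles (c)--(e) by citing that Besov and Triebel--Lizorkin spaces have the UMD property, invoking the theorem that on UMD spaces Rademacher type and martingale type coincide, and (for negative-order Sobolev spaces) appealing to the martingale type--cotype duality. You instead argue directly: you realize each space as a retract of an iterated scalar-valued $\ell^q(L^p)$- or $L^p(\ell^q)$-space via the Littlewood--Paley decomposition, and establish the basic iteration ``$V$ has martingale type $\p$ $\Rightarrow$ $L^r(\mu;V)$ has martingale type $\min\{r,\p\}$'' by Fubini (Minkowski for $r\ge\p$, the $\ell^r\hookrightarrow\ell^\p$ embedding for $r\le\p$), starting from the scalar Burkholder/Hilbert cases. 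This buys a more self-contained and constructive proof that avoids the general UMD machinery, at the cost of having to quote the Littlewood--Paley retract facts and the stability of martingale type under isomorphisms and complemented subspaces. The paper's proof is shorter because it offloads more to the literature. One small divergence: you take $W^\alpha_p=F^\alpha_{p,2}$ (the Bessel-potential convention), whereas the paper treats $W^\alpha_p=\bes^\alpha_{p,p}$ for non-integer $\alpha$ and splits the integer cases; since both $F^\alpha_{p,2}$ and $\bes^\alpha_{p,p}$ have martingale type $\min\{2,p\}$ in your framework, this does not affect the conclusion, but you should flag which convention you use.
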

		\begin{proof}
			We mostly point to existing literature and refer to \cites{MR3617205,MR3752640} for relevant definitions of (Rademacher) type and cotype, martingale cotype and UMD property.
			\ref{t1} comes from the Pythagorian identity 
			\begin{equation*}
				\|f_N\|_{L_2(V)}^2=\|f_0\|_{L_2(V)}^2+\sum_{n=1}^N\|df_n\|_{L_2(V)}^2
			\end{equation*}
			which is valid for Hilbert space $V$ and any martingale $(f_n)_{n=0}^N$.
			\ref{t2} is proved in \cite{MR3617205}*{Proposition 3.5.30}. 
			The Besov space $\bes^\alpha_{p,q}(\Rd)$ and $F^\alpha_{p,q}(\Rd)$ have UMD-property (\cite{MR942266}*{Theorem 6.3}) and have (Rademacher) type $\min\{2,p,q\}$. 
			On UMD spaces, type and martingale type are equivalent, \cite{MR3617205}*{Proposition 4.3.13}. 
			This implies \ref{t3} and \ref{t4}.  When $\alpha$ is not an integer, \ref{t5} follows from \ref{t3} because $W^\alpha_p(\Rd)=\bes^\alpha_{p,p}(\Rd)$. When $\alpha$ is a non-negative integer, $W^\alpha_p(\Rd)$ has  type $\min\{2,p\}$ and  cotype $\max\{2,p\}$ (\cite{MR835828}). Additionally, $W^\alpha_p(\Rd)$ is UMD (\cite{MR3617205}*{Example 4.2.18}), hence it has martingale type $\min\{2,p\}$. When $\alpha$ is a negative integer, by duality (\cite{MR3617205}*{Prop. 3.5.29}), the martingale type of $W^\alpha_p(\Rd)$ is the same as the martingale cotype of $W^{-\alpha}_{p'}(\Rd)$, $\frac1p+\frac1{p'}=1$. 
			We have deduced \ref{t5} for all possible values of $\alpha$.
		\end{proof}		
		Further examples of Banach spaces of non-trivial martingale type are the Besov spaces of modeled distributions, see \cite{MR4261328}.		
		We conclude the section by  some useful estimates for adapted sequences of $V$-valued random variables.
		\begin{lemma}\label{lem.cBDG}
			Let $m$ be a real number and $n$ be an extended real number such that $n,m\ge\p$. 
			Let $\{y_k\}_{k\ge0}$ be a sequence of random variables in $V$ which is $L_m$-integrable and adapted to a filtration $\{\cff_k\}_{k\ge0}$. For each $k\ge1$, we denote $\E_{k-1}y_k=\E(y_k|\cff_{k-1})$. 
			Let $\cgg$ be a $\sigma$-field such that $\cgg\subset\cff_0$.
			Then for every $N\ge0$
			\begin{align}\label{ineq.DBDB.center}
				\|\|\sum_{k=0}^Ny_k|\cgg\|_{V;m}\|_n
				&\le\sum_{k=1}^N\|\|\E_{k-1}y_k|\cgg\|_{V;m}\|_n 
				\\&\quad+C_{m,\p,V}\left(\|\|y_0|\cgg\|_{V;m}\|_n^\p+ \sum_{k=1}^N\|\|y_k-\E_{k-1}y_k|\cgg\|_{V;m}\|_n ^\p\right)^{\frac1\p}
				\nonumber
			\end{align}
			and
			\begin{equation}\label{ineq.DBDB}
				\|\|\sum_{k=0}^Ny_k|\cgg\|_{V;m}\|_n\le\sum_{k=1}^N\|\|\E_{k-1}y_k|\cgg\|_{V;m}\|_n +2C_{m,\p,V}\left(\sum_{k=0}^N\|\|y_k|\cgg\|_{V;m}\|_n ^\p\right)^{\frac1\p}.
			\end{equation}
		\end{lemma}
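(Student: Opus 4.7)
The plan is to reduce both inequalities to the conditional martingale-type bound \eqref{ineq.cd.type} via the Doob decomposition, and then to handle the nested norms with two applications of Minkowski's inequality (this is where the hypothesis $m,n\ge \p$ enters).

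First I would decompose the adapted sequence. For $k\ge 1$ set $d_k:=y_k-\E_{k-1}y_k$, and set $d_0:=y_0$. Then $(f_h)_{h=0}^N$, defined by $f_h:=\sum_{k=0}^h d_k$, is an $L_m$-integrable $V$-valued martingale with respect to $(\cff_h)$, and
\[
\sum_{k=0}^N y_k \;=\; \sum_{k=1}^N \E_{k-1}y_k \;+\; f_N.
\]
Applying $|\cdot|_V$ and conditional Minkowski in $L_m(\cdot|\cgg)$ gives the pointwise bound
\[
\Big\|\sum_{k=0}^N y_k\,\Big|\cgg\Big\|_{V;m}
\le \sum_{k=1}^N \|\E_{k-1}y_k\,|\cgg\|_{V;m} + \|f_N\,|\cgg\|_{V;m}.
\]

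Next I would invoke the conditional martingale-type inequality \eqref{ineq.cd.type} applied to the martingale $(f_h)$ and the sub-$\sigma$-field $\cgg\subset\cff_0$:
\[
\|f_N\,|\cgg\|_{V;m} \le C_{m,\p,V}\left\|\Big(|y_0|_V^\p+\sum_{k=1}^N |d_k|_V^\p\Big)^{1/\p}\,\Big|\cgg\right\|_{L_m}.
\]
Taking $\|\cdot\|_n$ of both sides, we get the first two terms of \eqref{ineq.DBDB.center} plus a residual nested norm that needs to be converted into $\ell^\p$-sums of $\|\|\cdot|\cgg\|_{V;m}\|_n$. This is the main technical step, and it relies on the two applications of Minkowski's inequality below.

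Since $m\ge\p$, the conditional $L_{m/\p}$-triangle inequality applied to the non-negative summands $|d_k|_V^\p$ yields
\[
\left\|\Big(|y_0|_V^\p+\sum_{k=1}^N |d_k|_V^\p\Big)^{1/\p}\,\Big|\cgg\right\|_{L_m}
=\Big\|\textstyle\sum_{k=0}^N|d_k|_V^\p\,\Big|\cgg\Big\|_{L_{m/\p}}^{1/\p}
\le \Big(\sum_{k=0}^N \|d_k\,|\cgg\|_{V;m}^\p\Big)^{1/\p}.
\]
Likewise, since $n\ge\p$, applying the ordinary Minkowski inequality in $L_{n/\p}$ to the random variables $\|d_k|\cgg\|_{V;m}^\p$ gives
\[
\Big\|\Big(\textstyle\sum_{k=0}^N\|d_k\,|\cgg\|_{V;m}^\p\Big)^{1/\p}\Big\|_n
\le \Big(\sum_{k=0}^N \|\|d_k\,|\cgg\|_{V;m}\|_n^\p\Big)^{1/\p}.
\]
Combining these with the preceding bound on $\|f_N|\cgg\|_{V;m}$ and recalling $d_0=y_0$, $d_k=y_k-\E_{k-1}y_k$ for $k\ge1$, yields \eqref{ineq.DBDB.center}.

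Finally, \eqref{ineq.DBDB} follows from \eqref{ineq.DBDB.center} after observing that conditional expectation is a contraction: since $|\E_{k-1}y_k|_V\le \E_{k-1}|y_k|_V$ and $\cgg\subset\cff_{k-1}$, Jensen's inequality together with the tower property gives $\|\E_{k-1}y_k|\cgg\|_{V;m}\le \|y_k|\cgg\|_{V;m}$, hence $\|d_k|\cgg\|_{V;m}\le 2\|y_k|\cgg\|_{V;m}$ for $k\ge 1$. Substituting this into the martingale-type term of \eqref{ineq.DBDB.center} produces the factor $2C_{m,\p,V}$, while the drift sum $\sum_{k=1}^N\|\|\E_{k-1}y_k|\cgg\|_{V;m}\|_n$ is kept as in \eqref{ineq.DBDB.center}. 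The main obstacle is the double Minkowski step, which forces the standing assumptions $n,m\ge \p$; once those are in place, the rest is a clean bookkeeping exercise.
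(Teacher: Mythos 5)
Your argument is correct and follows the same route as the paper: Doob decomposition into a drift term plus a martingale, the conditional martingale-type bound \eqref{ineq.cd.type} for the martingale part, followed by two applications of Minkowski's inequality (in $L_{m/\p}(\cdot|\cgg)$ and then $L_{n/\p}$) to pull the $\ell^\p$-sum outside both norms, and finally the conditional contraction estimate $|\E_{k-1}y_k|_V\le\E_{k-1}|y_k|_V$ to pass from \eqref{ineq.DBDB.center} to \eqref{ineq.DBDB}. The only difference is cosmetic: you unpack the paper's single invocation of ``Minkowski inequality'' into its two constituent steps, which is precisely what the hypothesis $m,n\ge\p$ is there for.
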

		\begin{proof}
			Define $f_{-1}=0$, $f_0=y_0$ and $f_k=y_0+\sum_{h=1}^k(y_h-\E_{h-1}y_h)$ for each $k\ge1$. Then $\{f_k,\cff_k\}_{k\ge0}$ is a martingale with values in $V$ which is $L_m$-integrable. From the Doob's decomposition
			\[
				\sum_{k=0}^Ny_k=\sum_{k=1}^N\E_{k-1}y_k+f_N
			\]
			we obtain by triangle inequality that
			\[
				\|\|\sum_{k=0}^Ny_k|\cgg\|_{V;m}\|_n \le \sum_{k=1}^N\|\|\E_{k-1}y_k|\cgg\|_{V;m}\|_n +\|\|f_N|\cgg\|_{V;m}\|_n.
			\]
			Since $m,n\ge\p$, from \eqref{ineq.cd.type}, we apply Minkowski inequality to obtain that
			\begin{align*}
				\|\|f_N|\cgg\|_{V;m}\|_n\le C_{m,\p,V}\left\|\left\|\left(\sum_{k=0}^N|df_k|_V^\p\right)^{1/\p}\Bigg|\cgg\right\|_{m}\right\|_n
				\le C_{m,\p,V}\left(\sum_{k=0}^N\|\|df_k|\cgg\|_{V;m}\|_n ^\p\right)^{1/\p}.
			\end{align*}
			This leads to the following inequality
			\begin{multline*}
				\|\|\sum_{k=0}^Ny_k|\cgg\|_{V;m}\|_n \le \sum_{k=1}^N\|\|\E_{k-1}y_k|\cgg\|_{V;m}\|_n +C_{m,\p,V}\left(\sum_{k=0}^N\|\|df_k|\cgg\|_{V;m}\|_n ^\p\right)^{1/\p}.
			\end{multline*}
			Since $df_0=y_0$ and $df_k=y_k-\E_{k-1}y_k$ for $k\ge1$, the previous inequality is equivalent to \eqref{ineq.DBDB.center}. 
			The estimate \eqref{ineq.DBDB} follows from \eqref{ineq.DBDB.center} if we can show that 
			\begin{equation*}
				\|\|y_k-\E_{k-1}y_k|\cgg\|_{V;m}\|_n\le 2\|\|y_k|\cgg\|_{V;m}\|_n
			\end{equation*}
			for every $k\ge1$.
			This is a trivial consequence of triangle inequality and the estimate $|\E_{h-1}y_h|_V\le\E_{h-1}(|y_h|_V)$.
		\end{proof}
	
\section{Stochastic sewing lemma in Banach spaces} 
\label{sec:stochastic_sewing_lemma}

	\subsection{The results} 
	\label{sub:results}
		We recall that $(V,|\cdot|_V)$ is a Banach space with martingale type $\p\in(1,2]$, $\Delta$ is the simplex $\{(s,t)\in[0,T]^2:s\le t\}$ for some fixed $T>0$ and $w$ is a continuous control on $\Delta$.
		We denote by $\Delta_2$ the set $\{(s,u,t)\in[0,T]^3:s\le u\le t\}$ and by $\cpp(I)$ the set of partitions of some interval $I$. For each $\pi\in\cpp(I)$, we define its mesh size with respect to the control $w$ as  $|\pi|_w:=\sup_{[u,v]\in\pi}w(u,v)$.
		
		\begin{theorem}[Stochastic Sewing Lemma in Banach spaces]
		\label{prop.VSSL}
			Let $\p\le m\le n\le \infty$ be fixed, $m<\infty$. 
			Let $A:\Omega\times \Delta\to V$ be $\cff\otimes\sbb(\Delta)/\sbb(V)$-measurable such that $A_{s,s}=0$ and $A_{s,t}$ is $\cff_t$-measurable for every $(s,t)\in \Delta$.
			For each $t\in[0,T]$, define
			\begin{equation}\label{def.caa}
				\A_t=\lim_{\pi\in\cpp([0,t]),|\pi|_w\downarrow0}\sum_{[u,v]\in \pi}A_{u,v}
			\end{equation}
			whenever the limit exists in probability.  
			Suppose that there are constants $\Gamma _1, \Gamma_2\ge0$ such that for any $(s,u,t)\in\Delta_2$ 
			\begin{equation}\label{con.dA1}
			\|\E_s[\delta A_{s,u,t}]\|_{V;n}\leq \Gamma_1w(s,t)^{1+\varepsilon_1}
			\end{equation} 
			and
			\begin{equation}\label{con.dA2}
				\|\|\delta A_{s,u,t}-\E_s \delta A_{s,u,t}|\cff_s\|_{V;m}\|_n\leq \Gamma_2w(s,t)^{\frac1\p+\varepsilon_2}.
			\end{equation}
			Then, we have
			\begin{enumerate}[(a)]
				\item\label{cl:a} $\caa_t$ is well-defined and $\caa_t-A_{0,t}$ is $L_m$-integrable for each $t\in[0,T]$, $\caa$ is $\{\cff_t\}$-adapted,
				\item\label{cl:b} there exists constant $C>0$ such that for every $(s,t)\in \Delta$,
				\begin{equation}\label{est.A1}
					\|\E_s(\A_t-\A_s-A_{s,t})\|_{V;n}\leq C \Gamma_1w(s,t)^{1+\varepsilon_1}
				\end{equation}
				and
				 \begin{align}\label{est.A2}
				 	\|\|\A_t-\A_s-A_{s,t}|\cff_s\|_{V;m}\|_n\leq C \Gamma_1w(s,t)^{1+\varepsilon_1} 
				 	+C \Gamma_2 w(s,t)^{\frac1\p+\varepsilon_2}.
				 \end{align}
			\end{enumerate}
		\end{theorem}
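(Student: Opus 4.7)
My plan is to adapt the dyadic refinement scheme of Feyel--de La Pradelle to the Banach-valued stochastic setting, replacing the scalar BDG step used in \cite{MR4089788} by the martingale type inequality of \cref{lem.cBDG}. For each partition $\pi\in\cpp([0,t])$ I set $A^\pi_t:=\sum_{[u,v]\in\pi}A_{u,v}$; the goal is to show that the net $\{A^\pi_t\}_\pi$ is Cauchy as $|\pi|_w\downarrow 0$ with respect to the double norm $\|\|\cdot|\cff_0\|_{V;m}\|_n$, identify the limit with $\caa_t$, and then replay the same estimates on a generic subinterval $[s,t]$ to obtain \eqref{est.A1}--\eqref{est.A2}.

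The core step is the one-midpoint refinement: if $\pi'$ refines $\pi$ by inserting one point $r$ in $[u,v]\in\pi$, then $A^{\pi'}_t-A^\pi_t=-\delta A_{u,r,v}$. Running this along a $w$-adapted bisection producing a sequence of partitions $\pi_k$ with $|\pi_k|_w\le 2^{-k}w(0,t)$, I obtain
\[
A^{\pi_{k+1}}_t-A^{\pi_k}_t = -\sum_{[u,v]\in\pi_k}\delta A_{u,m(u,v),v},
\]
and decompose each summand into its $\cff_u$-conditional mean plus the centered remainder. The conditional means are bounded via \eqref{con.dA1} and superadditivity of $w$ by $\Gamma_1|\pi_k|_w^{\varepsilon_1}w(0,t)$. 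Ordering the intervals of $\pi_k$ by left endpoint, the centered remainders form a $V$-valued martingale difference sequence with respect to the natural endpoint filtration, and \cref{lem.cBDG} combined with \eqref{con.dA2} bounds the double norm of their sum by $C\,\Gamma_2|\pi_k|_w^{\varepsilon_2}w(0,t)^{1/\p}$: this is precisely where the exponent $1/\p$ enters, through the $\ell^\p$-sum in \eqref{ineq.cd.type}. Summing over $k$ gives a telescoping geometric series in $2^{-k\min(\varepsilon_1,\varepsilon_2)}$ and proves the Cauchy property; measurability and adaptedness of $\caa_t$ then follow from the adaptedness of each $A^\pi$, and a standard comparison argument shows that any partition net with $|\pi|_w\downarrow 0$ converges to the same limit. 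This gives part~(a).

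For part~(b), I would apply exactly the same telescoping argument on the subinterval $[s,t]$ with dyadic partitions $\pi_k([s,t])$, now tracking the iterated conditional norm $\|\|\cdot|\cff_s\|_{V;m}\|_n$ from the start: the $k=0$ contribution $A^{\pi_0}_t-A_{s,t}=-\delta A_{s,u,t}$ together with the telescoping tails produce exactly the two additive bounds on the right-hand side of \eqref{est.A2}, while \eqref{est.A1} is recovered by taking conditional expectation $\E_s$ and observing that $\E_s$ annihilates the centered tier. The main technical obstacle I anticipate is the bookkeeping of the iterated norm $\|\|\cdot|\cff_s\|_{V;m}\|_n$ through the refinement: one must use the conditional martingale type inequality \eqref{ineq.cd.type} rather than its unconditional form, check that ordering the intervals of $\pi_k([s,t])$ by left endpoint yields a genuine filtration refining $\cff_s$, and ensure the Minkowski-type interchange between $\|\cdot\|_n$ and the $\ell^\p$ sum is used in the direction compatible with $n\ge m\ge \p$. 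A secondary point is that the dyadic scheme has to bisect with respect to $w$-length rather than Lebesgue length, which is the Feyel--de La Pradelle prescription.
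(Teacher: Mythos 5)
Your telescoping-over-dyadic-levels argument, combined with the conditional martingale-type inequality, is the right engine and correctly identifies where the exponent $1/\p$ comes from; this matches the spirit of the paper's \cref{lem.ANA}. However, there is a genuine gap at the step you call ``a standard comparison argument shows that any partition net with $|\pi|_w\downarrow 0$ converges to the same limit.'' That step is not standard in the stochastic setting, and it is where most of the technical content of the proof lies.

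Concretely: your dyadic bisection scheme only bounds $\sum_i A_{t_i,t_{i+1}}-A_{t_0,t_N}$ when the intermediate points $\{t_i\}$ are $w$-dyadic points of $[t_0,t_N]$. To show that an arbitrary partition $\pi$ and a dyadic partition $\pi_N$ yield the same limit, one compares them via the common refinement $\pi\cup\pi_N$, and the difference $A^\pi_t - A^{\pi\cup\pi_N}_t$ decomposes over the intervals $[u,v]\in\pi$ into local Riemann-sum deficits $\sum_j A_{p_j,p_{j+1}} - A_{u,v}$ with $\{p_j\}$ \emph{arbitrary} points of $[u,v]$ (they are dyadic for $[0,t]$, not for $[u,v]$). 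So you still need a bound on $\sum_i A_{t_i,t_{i+1}}-A_{t_0,t_N}$ for arbitrary points, which is exactly what your scheme does not deliver. In the deterministic sewing lemma one closes this gap by sequentially removing points in an order minimizing neighboring $w$-mass, but that removal order does \emph{not} produce an adapted martingale-difference sequence, so it is incompatible with \cref{lem.cBDG} and the $\ell^\p$-structure you need for the $1/\p$ exponent. The paper resolves this with \cref{lem.allocation}: it allocates the points of an arbitrary partition into $w$-dyadic bins of $[t_0,t_N]$ level by level, writing $\sum_i A_{t_i,t_{i+1}}-A_{t_0,t_N}=\sum_{h\ge0}\sum_i R^h_i$, where at each level $h$ the terms $R^h_i$ are supported in disjoint dyadic subintervals and are adapted to a filtration indexed by $i$, so that \cref{lem.cBDG} applies level by level and then summing over $h$ gives a geometric series. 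This is precisely the manoeuvre you would need to insert in place of the ``standard comparison argument,'' and without it your argument proves convergence only along a specific dyadic chain rather than along the full net $|\pi|_w\downarrow0$, and correspondingly does not yield \eqref{est.A1}--\eqref{est.A2}, since $\caa_t-\caa_s$ must be identified as the limit of Riemann sums over arbitrary partitions of $[s,t]$ rather than over dyadic ones.
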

		
		From \eqref{est.A1} and \eqref{est.A2}, one can derive a rate of convergence of the Riemann sums.
		\begin{corollary}[Riemann sum approximation]\label{prop.rate}
			Assume that the hypotheses of Theorem \ref{prop.VSSL} are satisfied. Let $(s,t)\in \Delta$ be fixed and $\pi$ be a partition of $[s,t]$. Define the Riemann sum
			\[
				A^\pi=\sum_{[u,v]\in\pi}A_{u,v}.
			\]
			Then, there exists a constant $C>0$ such that 
			\begin{equation}\label{est.rate}
				\|\delta\caa_{s,t}-A^\pi\|_{V;m}\le C \Gamma_1|\pi|^{\varepsilon_1}_w w(s,t)+C \Gamma_2 |\pi|^{\varepsilon_2}_ww(s,t) ^{\frac1\p}.
			\end{equation}
		\end{corollary}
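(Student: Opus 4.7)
The idea is to telescope the error $\delta\caa_{s,t}-A^\pi$ over the partition $\pi=\{s=t_0<t_1<\cdots<t_N=t\}$ and handle the resulting adapted sum by means of \cref{lem.cBDG}. Set
\begin{equation*}
y_k:=\delta\caa_{t_{k-1},t_k}-A_{t_{k-1},t_k},\qquad k=1,\ldots,N,
\end{equation*}
so that $\delta\caa_{s,t}-A^\pi=\sum_{k=1}^N y_k$. Each $y_k$ is $\cff_{t_k}$-measurable by part \ref{cl:a} of \cref{prop.VSSL}, and $L_m$-integrability of $y_k$ follows from the same part together with the triangle inequality applied to $A_{0,\cdot}$ and $\delta A_{0,t_{k-1},t_k}$. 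Thus $(y_k,\cff_{t_k})_{k=1}^N$ is an adapted $L_m$-integrable sequence in $V$, and \cref{lem.cBDG} with $n=m$ and $\cgg$ the trivial $\sigma$-field (admissible since $m\ge\p$) yields
\begin{equation*}
\|\delta\caa_{s,t}-A^\pi\|_{V;m}\le\sum_{k=1}^N\|\E_{t_{k-1}}y_k\|_{V;m}+2C_{m,\p,V}\biggl(\sum_{k=1}^N\|y_k\|_{V;m}^\p\biggr)^{1/\p}.
\end{equation*}

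Next, I would bound the individual summands using the conclusions of \cref{prop.VSSL}. Since $m\le n$, Jensen's inequality gives $\|Y\|_{V;m}\le\|\|Y|\cff_s\|_{V;m}\|_n$ and $\|Y\|_{V;m}\le\|Y\|_{V;n}$, so \eqref{est.A1} and \eqref{est.A2} yield
\begin{equation*}
\|\E_{t_{k-1}}y_k\|_{V;m}\le C\Gamma_1 w(t_{k-1},t_k)^{1+\varepsilon_1},
\end{equation*}
\begin{equation*}
\|y_k\|_{V;m}\le C\Gamma_1 w(t_{k-1},t_k)^{1+\varepsilon_1}+C\Gamma_2 w(t_{k-1},t_k)^{\frac1\p+\varepsilon_2}.
\end{equation*}

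The remaining work is bookkeeping via superadditivity of $w$ (so $\sum_k w(t_{k-1},t_k)\le w(s,t)$) and the uniform bound $w(t_{k-1},t_k)\le|\pi|_w$. The first sum is then at most $C\Gamma_1|\pi|_w^{\varepsilon_1}w(s,t)$. In the $\p$-power sum, the $\Gamma_2$ contribution evaluates to $C\Gamma_2|\pi|_w^{\varepsilon_2}w(s,t)^{1/\p}$, exactly matching the target, while the $\Gamma_1$ contribution produces $C\Gamma_1|\pi|_w^{\varepsilon_1+(\p-1)/\p}w(s,t)^{1/\p}$, which is absorbed into $C\Gamma_1|\pi|_w^{\varepsilon_1}w(s,t)$ via the elementary bound $|\pi|_w^{(\p-1)/\p}w(s,t)^{1/\p}\le w(s,t)$. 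Combining produces \eqref{est.rate}.

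I do not expect a serious obstacle: the only conceptual point is recognising that the collection of one-step remainders $y_k$ is itself an adapted sequence to which the Doob-decomposition estimate \cref{lem.cBDG} applies, converting the one-interval estimates of \cref{prop.VSSL} into a multi-interval rate in $|\pi|_w$. The mildly delicate numerical step is the appearance of the $\Gamma_1$ term inside the $\p$-power sum, which must be shown to be of lower order than the $\Gamma_1$ contribution from the conditional-expectation sum, and this is precisely where the inequality $|\pi|_w\le w(s,t)$ is used.
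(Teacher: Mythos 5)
Your proof is correct and takes essentially the same route as the paper: telescope $\delta\caa_{s,t}-A^\pi$ over $\pi$, apply \eqref{ineq.DBDB} to the adapted sequence of one-step remainders, and feed in \eqref{est.A1} and \eqref{est.A2} together with superadditivity of $w$ and $w(u,v)\le|\pi|_w$. The only difference is that you make explicit the absorption of the $\Gamma_1$ term arising inside the $\p$-power sum via $|\pi|_w^{(\p-1)/\p}w(s,t)^{1/\p}\le w(s,t)$, a step the paper leaves to the reader with ``estimated similarly.''
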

		
		The following result can be considered as an extension of the Doob--Meyer decomposition for vector valued stochastic processes.
		
			\begin{theorem}\label{thm.DM}
				Suppose that the hypotheses of \cref{prop.VSSL} hold. In addition, assume that $A$ is integrable and there are constants $\Gamma_3\ge0$, $\varepsilon_3>0$ such that
				\begin{equation}
					\label{con:dA3}\|\|(\E_s-\E_u)A_{u,t}|\cff_s \|_{V;m}\|_n\le \Gamma_3 w(s,t)^{\frac1\p+\varepsilon_3}\,,
				\end{equation}
				for every $(s,u,t)\in \Delta_2$.
				Then, there exist stochastic processes $\cmm,\cjj:\Omega\times[0,T]\to V$ and positive constants $C_1,C_2,C_3$  satisfying the following properties
				\begin{enumerate}[(\roman*)]
					\item\label{cl:amj} $\cmm,\cjj$ are $\{\cff_t\}$-adapted, $L_m$-integrable and $\caa_t=\cmm_t+\cjj_t$ a.s. for every $t\in[0,T]$,
				 	\item\label{cl:m} $(\cmm_s)_{0\le s\le T}$ is an $\{\cff_t\}$-martingale with $\cmm_0=0$,
				 	\item \label{cl:est.m} for any $(s,t)\in \Delta$,
				 	\begin{equation}
		 				\label{est:M} \|\|\cmm_t-\cmm_s-A_{s,t}+\E_sA_{s,t}|\cff_s\|_{V;m}\|_n\le C_2 \Gamma_2w(s,t) ^{\frac1\p+\varepsilon_2}+C_3 \Gamma_3w(s,t)^{\frac1\p+\varepsilon_3}\,,
		 			\end{equation}
		 			\item\label{cl:est.j} for any $(s,t)\in \Delta$,
				 	 	\begin{align}
				 	 		\label{est:J}\|\| \cjj_t-\cjj_s-\E_s A_{s,t}|\cff_s\|_{V;m}\|_n \le C_1 \Gamma_1w(s,t)^{1+\varepsilon_1}+C_3 \Gamma_3w(s,t)^{\frac1\p+\varepsilon_3}\,,
				 	 	\end{align}
				 	\item \label{cl:est.j'} for any $(s,t)\in \Delta$,
				 	\begin{equation}
		 				\label{est:J'} \| \E_s(\cjj_t-\cjj_s-A_{s,t})\|_{V;n}\le C_1 \Gamma_1w(s,t)^{1+\varepsilon_1}\,.
		 			\end{equation}
				 \end{enumerate} 
				Given $A$, we have the following characterizations.
				\begin{enumerate}[resume*]
				 	\item\label{ch1} The pair of processes $(\cmm,\cjj)$ is characterized uniquely by the set of properties \ref{cl:amj}, \ref{cl:m}, \ref{cl:est.m} or, alternatively by the set of properties \ref{cl:amj}, \ref{cl:m}, \ref{cl:est.j}. 
				 	\item\label{ch2} The process $\cmm$ is characterized uniquely by \ref{cl:m} and \ref{cl:est.m}. 
				 	\item\label{ch3} The process $\cjj$ is characterized uniquely by \ref{cl:est.j}, \ref{cl:est.j'} and the fact that $\cjj$ is $\{\cff_t\}$-adapted and $\cjj_0=0$.
				\end{enumerate} 
				Furthermore, for every fixed $t\in[0,T]$ and any partition $\pi=\{0=t_0<t_1<\cdots<t_N=t\}$ of $[0,t]$, define the Riemann sums
				\begin{equation*}
						M^\pi_{t}:=\sum_{i=0}^{N-1}\left(A_{t_i,t_{i+1}}-\E_{t_i} A_{t_i,t_{i+1}}\right)
						\quad\textrm{and}\quad
						J^\pi_{t}:=\sum_{i=0}^{N-1}\E_{t_i}A_{t_i,t_{i+1}}\,.
				\end{equation*}
				Then $\{M^\pi_{t}-A_{0,t}+\E_0A_{0,t}\}_\pi$ and $\{J^\pi_{t}-\E_0A_{0,t}\}_\pi $  
				converge to $\cmm_t-A_{0,t}+\E_0A_{0,t}$ and $\cjj_t-\E_0A_{0,t}$ respectively in $L_m$ as $|\pi|_w$ goes to 0.
			\end{theorem}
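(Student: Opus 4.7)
My plan is to apply the Banach-valued sewing lemma \cref{prop.VSSL} twice, once to each of the natural germs
\[
\hat A_{s,t}:=\E_s A_{s,t}\tand \tilde A_{s,t}:=A_{s,t}-\E_s A_{s,t},
\]
which are both $\cff_t$-measurable and satisfy $A=\hat A+\tilde A$. A direct computation gives the bookkeeping identities
\[
\E_s\delta\hat A_{s,u,t}=\E_s\delta A_{s,u,t},\qquad \delta\hat A_{s,u,t}-\E_s\delta\hat A_{s,u,t}=-(\E_u-\E_s)A_{u,t},
\]
\[
\E_s\delta\tilde A_{s,u,t}=0,\qquad \delta\tilde A_{s,u,t}=(\delta A_{s,u,t}-\E_s\delta A_{s,u,t})+(\E_u-\E_s)A_{u,t}.
\]
Combined with \eqref{con.dA1}, \eqref{con.dA2}, and the new hypothesis \eqref{con:dA3}, these show that $\hat A$ satisfies the hypotheses of \cref{prop.VSSL} with the roles of $(\Gamma_1,\Gamma_2)$ played by $(\Gamma_1,\Gamma_3)$ and exponents $(\varepsilon_1,\varepsilon_3)$, while $\tilde A$ satisfies them with vanishing drift part ($\Gamma_1=0$) and a fluctuation bound that splits into a $\Gamma_2 w^{1/\p+\varepsilon_2}$ and a $\Gamma_3 w^{1/\p+\varepsilon_3}$ piece. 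The dyadic summation underlying the proof of \cref{prop.VSSL} accommodates such a sum as two separate contributions, which is what produces the clean split in \eqref{est:M}.

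Denote by $\cjj$ and $\cmm$ the two resulting sewn processes. Linearity of the Riemann sums yields $\caa=\cmm+\cjj$, giving \ref{cl:amj}. Estimates \eqref{est.A1} and \eqref{est.A2} applied to $\hat A$ produce \ref{cl:est.j'} and \ref{cl:est.j} respectively, while \eqref{est.A2} applied to $\tilde A$ (for which $\Gamma_1=0$) produces \ref{cl:est.m}. For the martingale property \ref{cl:m}, the degenerate drift estimate \eqref{est.A1} applied to $\tilde A$ reduces to $\E_s(\cmm_t-\cmm_s-\tilde A_{s,t})=0$, and since $\E_s\tilde A_{s,t}=0$ by construction this gives $\E_s(\cmm_t-\cmm_s)=0$; adaptedness is inherited from the Riemann sums. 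The Riemann sum convergence of $M^\pi_t$ and $J^\pi_t$ is the content of \cref{prop.rate} applied to $\tilde A$ and $\hat A$ respectively, upon identifying $M^\pi_t=\sum_i\tilde A_{t_i,t_{i+1}}$ and $J^\pi_t=\sum_i\hat A_{t_i,t_{i+1}}$.

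The main obstacle is the uniqueness statements \ref{ch1}--\ref{ch3}, which I expect to reduce to the following key lemma: any $V$-valued $\{\cff_t\}$-martingale $N$ with $N_0=0$ satisfying $\|\|N_t-N_s|\cff_s\|_{V;m}\|_n\le C w(s,t)^{1/\p+\varepsilon}$ for all $(s,t)\in\Delta$ must vanish identically. To prove it, for any partition $\pi=\{s=t_0<\cdots<t_N=t\}$ set $y_k:=N_{t_k}-N_{t_{k-1}}$, so that $(y_k)$ is a martingale difference sequence and $\sum_k y_k=N_t-N_s$. Applying \cref{lem.cBDG} with $\cgg=\cff_s$ and noting that conditioning down from $\cff_{t_{k-1}}$ to $\cff_s$ cannot increase $\|\|\cdot|\cff_\cdot\|_{V;m}\|_n$ (by Jensen with the convex function $x\mapsto x^{n/m}$), I obtain
\[
\|\|N_t-N_s|\cff_s\|_{V;m}\|_n\le C_{m,\p,V}\Big(\sum_{k=1}^N\|\|y_k|\cff_s\|_{V;m}\|_n^\p\Big)^{1/\p}\le C|\pi|_w^\varepsilon w(s,t)^{1/\p}.
\]
Since the left-hand side is independent of $\pi$, letting $|\pi|_w\downarrow 0$ forces $N\equiv 0$. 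Applying this to $\cmm-\cmm'$ for any competing pair $(\cmm',\cjj')$ — the required increment bound following from subtracting the two instances of \ref{cl:est.m} and using $\caa=\cmm+\cjj=\cmm'+\cjj'$ — yields \ref{ch1} and \ref{ch2}. Uniqueness in \ref{ch3} is the classical sewing lemma uniqueness: an $\{\cff_t\}$-adapted process with $\cjj_0=0$ satisfying \ref{cl:est.j} and \ref{cl:est.j'} is forced by telescoping along partitions to coincide with the limit of Riemann sums of $\hat A$, namely $\cjj$.
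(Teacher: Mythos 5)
Your proposal takes essentially the same route as the paper: define $\hat A_{s,t}=\E_sA_{s,t}$ and $\tilde A_{s,t}=A_{s,t}-\E_sA_{s,t}$ (the paper calls these $J_{s,t}$ and $M_{s,t}$), verify the sewing hypotheses via the bookkeeping identities you wrote (your identities are in fact the correct ones — the paper's displayed formulas for $\delta J$ and $\delta M$ contain small typos, dropping the $\E_s$ from the first term and the whole $\delta A-\E_s\delta A$ piece from the second, though the argument it actually uses is the same as yours), apply \cref{prop.VSSL} and \cref{prop.rate} to each piece, and obtain uniqueness by showing that any adapted additive process with the relevant increment bounds must vanish. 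The only visible difference is that where the paper disposes of uniqueness with a one-line appeal to \cref{lem.uniq}, you re-derive the martingale part of it from scratch via \cref{lem.cBDG} — which buys a slightly sharper fact (for martingales only the $w^{1/\p+\varepsilon}$ bound is needed, since the drift vanishes) but is the same underlying dyadic/BDG argument; your use of Jensen for $x\mapsto x^{n/m}$ to pass from conditioning on $\cff_{t_{k-1}}$ down to $\cff_s$ is correct.
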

			\begin{remark}
				Relations between \cref{prop.VSSL,thm.DM} and It\^o calculus are described in \cite{MR4089788}.
				The conditional norms (with $n=\infty$) in \cref{prop.VSSL} play an indispensable role in \cite{FHL21}. An immediate application of \cref{thm.DM} which is described in \cite{FHL21} is the decomposition of a stochastic controlled rough path as the sum of a martingale and a controlled rough path. We refer to the afore-mentioned references for further detail.
			\end{remark}

	\subsection{Proofs} 
	\label{sub:proofs}
		\begin{lemma}\label{lem.uniq}
			Let $w,\eta:\Delta\to\R_+$ be functions such that $w$ is super-additive and  $\lim_{|t-s|\to0}\eta(s,t)=0$.
			Let $(R_{s,t})_{(s,t)\in\Delta }$ be a two-parameter stochastic process with values in $V$ such that $R$ is $\{\cff_t\}$-adapted and satisfies
				\begin{equation}\label{con.R0}
					\|R_{s,t}\|_{V;\p}\le \left(w(s,t)\eta(s,t)\right)^{1/\p}
					\quad\textrm{and}\quad
					\|\E_s R_{s,t}\|_{V;\p}\le w(s,t)\eta(s,t).
				\end{equation}
			Then
			\begin{equation}\label{lim.Rpi}
				\lim_{|\pi|\to0}\sum_{[u,v]\in\pi}R_{u,v}=0 \quad\textrm{in}\quad L_\p(V),
			\end{equation}
			where $|\pi|=\sup_{[u,v]\in \pi}|v-u|$.
			In particular, if additionally $R$ is additive, i.e. $R_{s,u}+R_{u,t}=R_{s,t}$ a.s. for all $(s,u,t)\in \Delta_2$, then $R$ is identically $0$.
		\end{lemma}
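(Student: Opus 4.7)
The plan is to fix an interval $[s,t]\subseteq[0,T]$ and a partition $\pi=\{s=u_0<u_1<\cdots<u_N=t\}$, and to decompose the Riemann sum $S_\pi:=\sum_{i=0}^{N-1}R_{u_i,u_{i+1}}$ into a conditional-mean part and a martingale remainder
\[
S_\pi=J^\pi+M^\pi,\qquad J^\pi:=\sum_{i=0}^{N-1}\E_{u_i}R_{u_i,u_{i+1}},\qquad M^\pi:=\sum_{i=0}^{N-1}\bigl(R_{u_i,u_{i+1}}-\E_{u_i}R_{u_i,u_{i+1}}\bigr).
\]
Since $R$ is $\{\cff_t\}$-adapted, the summands $y_i:=R_{u_i,u_{i+1}}-\E_{u_i}R_{u_i,u_{i+1}}$ are martingale differences relative to $(\cff_{u_k})_k$, so $M^\pi$ is the terminal value of an $L_\p$-integrable $V$-valued martingale. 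The strategy is to bound $J^\pi$ and $M^\pi$ in $L_\p(V)$ separately, using each of the two hypotheses in \eqref{con.R0}, super-additivity $\sum_i w(u_i,u_{i+1})\le w(s,t)$, and the martingale-type inequality at exponent $\p$ (which applies thanks to \cref{con.Vtype}).

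The bound on $J^\pi$ comes straight from the triangle inequality and the second half of \eqref{con.R0}:
\[
\|J^\pi\|_{V;\p}\le\sum_{i=0}^{N-1}w(u_i,u_{i+1})\,\eta(u_i,u_{i+1})\le w(s,t)\sup_{[u,v]\in\pi}\eta(u,v).
\]
For $M^\pi$ I would invoke \cref{lem.cBDG} with $m=n=\p$ and trivial conditioning (or apply the defining inequality of martingale type directly); combined with $\|y_i\|_{V;\p}\le 2\|R_{u_i,u_{i+1}}\|_{V;\p}$ and the first bound in \eqref{con.R0}, this yields
\[
\|M^\pi\|_{V;\p}\le 2\,C_{\p,\p,V}\Bigl(\sum_{i=0}^{N-1}w(u_i,u_{i+1})\,\eta(u_i,u_{i+1})\Bigr)^{1/\p}\le 2\,C_{\p,\p,V}\,w(s,t)^{1/\p}\Bigl(\sup_{[u,v]\in\pi}\eta(u,v)\Bigr)^{1/\p}.
\]
The hypothesis $\eta(s',t')\to 0$ as $|t'-s'|\to 0$ forces $\sup_{[u,v]\in\pi}\eta(u,v)\to 0$ as $|\pi|\to 0$, so both bounds vanish and \eqref{lim.Rpi} follows.

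For the final assertion, if $R$ is additive then $R_{s,t}=S_\pi$ a.s.\ for every partition $\pi$ of $[s,t]$; applying \eqref{lim.Rpi} on the subinterval $[s,t]$ (where the hypotheses on $w$ and $\eta$ remain valid by restriction) gives $\|R_{s,t}\|_{V;\p}=0$, hence $R_{s,t}=0$ a.s. No step is genuinely delicate; the only point to verify is that the exponent $\p$ appearing on the right of \eqref{con.R0} matches the one supplied by the martingale-type inequality, so that super-additivity of $w$ absorbs $\sum_i w(u_i,u_{i+1})$ with exponent exactly $1$ and only the vanishing supremum of $\eta$ survives.
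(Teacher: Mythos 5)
Your proof is correct and is essentially the paper's own argument: the paper cites inequality \eqref{ineq.DBDB} with $m=n=\p$, and that inequality is exactly the Doob decomposition $S_\pi=J^\pi+M^\pi$ you wrote out explicitly, followed by the triangle inequality on the predictable part and the martingale type inequality on the martingale part. The use of super-additivity of $w$ and the vanishing of $\sup_{[u,v]\in\pi}\eta(u,v)$, as well as the additivity argument for the final claim, match the paper step for step.
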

		\begin{proof}
			Let $\varepsilon\in(0,1)$ be fixed. Let $\pi$ be a partition of $[0,T]$ such that $\sup_{[s,t]\in \pi}\eta(s,t)\le \varepsilon$. 
			Applying estimate \eqref{ineq.DBDB} (with $m=n=\p$), we see that
			\[
				\|\sum_{[u,v]\in\pi}R_{u,v}\|_{V;\p}\lesssim \left(\sum_{[u,v]\in\pi}\|R_{u,v}\|_{V;\p}^\p\right)^{1/\p}+\sum_{[u,v]\in\pi}\|\E_uR_{u,v}\|_{V;\p}.
			\]
			Using super-additivity of $w$ and condition \eqref{con.R0}, we have
			\begin{align*}
				\sum_{[u,v]\in\pi}\|R_{u,v}\|_{V;\p}^\p\le \varepsilon \sum_{[u,v]\in \pi}w(u,v)\le \varepsilon w(0,T).
			\end{align*}
			In a similar way, we have $\sum_{[u,v]\in\pi}\|\E_uR_{u,v}\|_{V;\p}\le \varepsilon w(0,T)$. It follows that $$\|\sum_{[u,v]\in\pi}R_{u,v}\|_{V;\p}\lesssim \varepsilon^{1/\p}.$$ This implies \eqref{lim.Rpi}. 

			Suppose additionally that $R$ is additive. For each $(s,t)\in \Delta$ and any partition $\pi$ of $[s,t]$, we have by additivity $\sum_{[u,v]\in \pi}R_{u,v}=R_{s,t}$ a.s.
			Hence, \eqref{lim.Rpi} implies that $R_{s,t}=0$. 
		\end{proof}
		
		Let $w$ be a control.
		For each $(s,t)\in \Delta$, define
		\[
			u=\inf\{r\in[s,t]:w(s,r)\ge\frac12 w(s,t)\}
		\]
		and call $u$ as the $w$-midpoint of $[s,t]$. Since $t$ trivially belongs to the set defining $u$ above, such a point always exists and is uniquely defined.
		If $u$ is a $w$-midpoint of $[s,t]$, then it follows from the continuity of $w$ that 
		\begin{equation}\label{est.wsut}
		 	w(s,u)\le\frac12w(s,t) \quad\textrm{and}\quad w(u,t)\le\frac12w(s,t).
		 \end{equation} 
		Indeed, by definition, there exists a sequence $\{r_j\}_j\subset[s,t]$ decreasing to $u$ such that $w(s,r_j)\ge\frac12w(s,t)$. Since $w$ is super-additive, $w(s,r_j)+w(r_j,t)\le w(s,t)$. This implies that $w(r_j,t)\le \frac12w(s,t)$. Since $w$ is continuous, $\lim_jw(r_j,t)=w(u,t)$. This implies that $w(u,t)\le\frac12 w(s,t)$.
		We show $w(s,u)\le\frac12 w(s,t)$ by contradiction. Suppose that $w(s,u)>\frac12 w(s,t)$. By continuity, there is $\varepsilon>0$ such that $w(s,r)>\frac12 w(s,t)$ for every $r\in(u- \varepsilon,u)$. This contradicts with the definition of $u$.

		Let $(s,t)$ be in $\Delta$. For each integer $h\ge0$, we define $h$-dyadic points of $[s,t]$ with respect to the control $w$ in the following way. Define $d^0_0(s,t)=s$ and $d^0_1(s,t)=t$. For each integer $i=0,\dots,2^{h+1}$, we set $d^{h+1}_i(s,t)=d^h_{i/2}(s,t)$ if $i$ is even and $d^{h+1}_i(s,t)$ equal to the $w$-midpoint of $[d^h_{(i-1)/2}(s,t),d^h_{(i+1)/2}(s,t)]$ if $i$ is odd. 
		In the specific case when $w(\bar s,\bar t)=\bar t-\bar s$, we have $d^h_i=s+i2^{-h}(t-s)$ for every $i=0,\ldots,2^{h+1}$, which are the standard dyadic points of $[s,t]$.
		Thus,  the collection $D^h_w(s,t):=\{d^h_i(s,t)\}_{i=0}^{2^h}$ is regarded as $h$-dyadic points of $[s,t]$ with respect to the control $w$. As expected, these collections of points have similar properties to the standard dyadic points.
		Namely,  
		for every integers $h\ge0$ and $i=0,\dots,2^h-1$, we have $D^h_w(s,t)\subset D^{h+1}_w(s,t)$,
		\begin{equation}
		 	[d^h_i(s,t),d^h_{i+1}(s,t)]=[d^{h+1}_{2i}(s,t),d^{h+1}_{2i+1}(s,t)]\cup[d^{h+1}_{2i+1}(s,t),d^{h+1}_{2i+2}(s,t)]
		\end{equation} 
		 and
		\begin{equation}\label{est.wdh}
			w(d^{h}_{i}(s,t),d^{h}_{i+1}(s,t))\le 2^{-h} w(s,t).
		\end{equation}
		\begin{lemma}\label{lem.allocation}
			Let $A:\Omega\times \Delta\to V$ be $\cff\otimes\sbb(\Delta)/\sbb(V)$-measurable such that $A_{s,s}=0$ for every $s\in[0,T]$.
			Let $(s,t)$ be in $\Delta$. 
			Then, for every $N\ge0$ and every  $s\le t_0<\cdots <t_N\le t$, there exist a positive integer $h_0$ and random variables $R^h_i$, $i=0,\cdots,2^h-1$, $h\ge0$, such that
			\begin{enumerate}[wide,itemsep=2pt,labelindent=0pt,label={\upshape(\roman*)}]
				\item\label{p.i} $R^h_i=0$ for every $h\ge h_0$ and every $i$;
				\item\label{p.ii} for each $h,i$, there exist four (not necessarily distinct) points $s^{h,i}_1\le s^{h,i}_2\le s^{h,i}_3\le s^{h,i}_4$ in $[d^h_i(t_0,t_N),d^h_{i+1}(t_0,t_N)]$ so that
				\begin{equation}\label{form.Rni}
					R^h_i= A_{s^{h,i}_1,s^{h,i}_2}+A_{s^{h,i}_2,s^{h,i}_3}+A_{s^{h,i}_3,s^{h,i}_4}-A_{s^{h,i}_1,s^{h,i}_4}\,;
				\end{equation}
				\item\label{p.iii} the following identity holds 
				\begin{equation}\label{id.yaskov}
					\sum_{i=0}^{N-1} A_{t_i,t_{i+1}}-A_{t_0,t_N}=\sum_{h\ge0}\sum_{i=0}^{2^h-1}R^h_i.
				\end{equation}
			\end{enumerate}		
		\end{lemma}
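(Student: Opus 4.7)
The construction proceeds by associating, to each dyadic cell, a ``local refinement'' of a single arrow determined by the partition points it contains. For each cell $(h,i)$ set $T^h_i := \{t_0, \dots, t_N\} \cap [d^h_i(t_0,t_N), d^h_{i+1}(t_0,t_N)]$ and, when $T^h_i$ has at least two elements, define
\[
  Y^h_i := A_{\min T^h_i,\,\max T^h_i},
\]
while otherwise $Y^h_i := 0$; similarly let
\[
  G^h_i := A_{\max T^{h+1}_{2i},\,\min T^{h+1}_{2i+1}}
\]
when both children $T^{h+1}_{2i}, T^{h+1}_{2i+1}$ are nonempty, and $G^h_i := 0$ otherwise. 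The candidate allocation is then $R^h_i := Y^{h+1}_{2i} + Y^{h+1}_{2i+1} + G^h_i - Y^h_i$.

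For property \ref{p.ii} I would pick $s^{h,i}_1 = \min T^h_i$, $s^{h,i}_2 = \max T^{h+1}_{2i}$, $s^{h,i}_3 = \min T^{h+1}_{2i+1}$, $s^{h,i}_4 = \max T^h_i$ when both child cells are nonempty; these four points sit in $[d^h_i(t_0,t_N), d^h_{i+1}(t_0,t_N)]$ and are ordered because $s^{h,i}_2 \le d^{h+1}_{2i+1} \le s^{h,i}_3$, and one checks directly that $R^h_i$ coincides with $A_{s^{h,i}_1,s^{h,i}_2}+A_{s^{h,i}_2,s^{h,i}_3}+A_{s^{h,i}_3,s^{h,i}_4}-A_{s^{h,i}_1,s^{h,i}_4}$. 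In the remaining (``degenerate'') configurations a direct check shows $R^h_i = 0$, so any coincident choice for the $s^{h,i}_k$'s works.

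Property \ref{p.i} follows from the key observation that $|T^h_i| \le 2$ already forces $R^h_i = 0$, by a short case split on whether one of the two children is empty. Super-additivity of $w$ ensures $|T^h_i| \le 2$ once $2^{-h} w(t_0, t_N)$ drops below $\min_j w(t_j, t_{j+2})$, which is positive under a mild non-degeneracy of $w$ on the partition; the residual case of three or more consecutive $t_j$'s lying in a flat region $\{w \equiv 0\}$ requires an ancillary argument to terminate the construction at a finite level $h_0$.

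The heart of the proof is \ref{p.iii}. Summing the defining identity first over $i$ and then over $h$, the $Y$-terms telescope to $\lim_{H \to \infty} \sum_j Y^H_j - Y^0_0 = -A_{t_0,t_N}$ using \ref{p.i}. What remains is to establish $\sum_{h,i} G^h_i = \sum_{j=0}^{N-1} A_{t_j, t_{j+1}}$: each consecutive arrow $A_{t_j, t_{j+1}}$ must be picked up exactly once by the family $\{G^h_i\}$, namely at the unique cell $(h^*,i^*)$ which is the deepest common dyadic ancestor of $\{t_j, t_{j+1}\}$ whose $w$-midpoint $d^{h^*+1}_{2i^*+1}$ strictly separates them. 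Proving existence, uniqueness, and the correct multiplicity of this assignment---together with handling boundary ties when some $t_j$ coincides with a dyadic point $d^h_i(t_0,t_N)$ via a consistent half-open convention---constitutes the main bookkeeping obstacle I anticipate.
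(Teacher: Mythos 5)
Your $R^h_i$ coincides with the paper's: writing $I(\{s_0<\cdots<s_K\})=\sum_{k<K}A_{s_k,s_{k+1}}-A_{s_0,s_K}$ for a finite collection of time points, the paper defines $R^h_i:=I(\cpp^h_i)-I(\cpp^{h+1}_{2i})-I(\cpp^{h+1}_{2i+1})$ with $\cpp^h_i:=\{t_j\}_{j}\cap[d^h_i,d^h_{i+1})$ (half-open, except the last cell), and expanding $I$ as ``sum of consecutive arrows minus $Y$'' reproduces exactly your $R^h_i=Y^{h+1}_{2i}+Y^{h+1}_{2i+1}+G^h_i-Y^h_i$, because the arrows internal to each child cancel against those of the parent and only the bridge $G^h_i$ survives. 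So the decomposition is the same; what differs is the bookkeeping for \ref{p.iii}, and this is where your proposal has a genuine gap. After your $Y$-telescope you reduce \ref{p.iii} to the identity $\sum_{h,i}G^h_i=\sum_j A_{t_j,t_{j+1}}$, but you never prove it: you only sketch a ``deepest common dyadic ancestor'' matching and explicitly defer existence, uniqueness and multiplicity as an anticipated obstacle.

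That obstacle is spurious, and dissolving it is precisely why the $Y$- and arrow-terms should be bundled into $I$ rather than tracked separately. One has $\sum_i R^h_i=\sum_i I(\cpp^h_i)-\sum_j I(\cpp^{h+1}_j)$, hence $\sum_{h<H}\sum_i R^h_i=I(\cpp^0_0)-\sum_j I(\cpp^H_j)$ for every $H$; once $H$ is large enough that each cell contains at most one $t_j$ the right-hand sum vanishes, which gives \eqref{id.yaskov} with $h_0=H$ and yields \ref{p.i} for free ($I(\cpp^h_i)=0$ forces $R^h_i=0$). No arrow-by-arrow matching is required. Two peripheral remarks: the ``boundary-tie'' worry you flag disappears once the cells are taken half-open so that the $\cpp^h_i$ actually partition $\{t_j\}_j$; and the termination concern you raise in \ref{p.i} --- if $w$ vanishes on a subinterval containing three or more $t_j$'s then the $w$-dyadic points never separate them --- is a real edge case that the published proof also passes over silently. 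It is innocuous where the lemma is used, since the hypotheses of \cref{prop.VSSL} force $\delta A$ (and hence the offending $R^h_i$) to vanish wherever $w$ does, but you are right that it is not dealt with at the level of \cref{lem.allocation} itself.
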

		\begin{proof}
			For brevity, we abbreviate $d^h_i$ for  $d^h_i(t_0,t_N)$.
			For each collection $\cpp=\{s_i\}_{i=0}^K$ we define
			\[
				I(\cpp)=\sum_{i=0}^{K-1} A_{s_i,s_{i+1}}-A_{s_0,s_K}
				\quad\textmd{if}\quad K\ge1
			\] 
			and $I(\cpp)=0$ whenever $K=0$ or $\cpp$ is empty.
			For any two finite collections $\cpp_1,\cpp_2$, define
			\begin{equation}\label{def.dI}
				\delta I(\cpp_1,\cpp_2)=I(\cpp_1\cup \cpp_2)-I(\cpp_1)-I(\cpp_2).
			\end{equation}
			Put $\cpp^0_0=\{t_i\}_{i=0}^N$, which is a subset of $[d^0_0,d^0_1]$. The main idea of the proof is to allocate the elements of $\cpp^0_0$ into the $w$-dyadic subintervals of $[s,t]$ while keeping track of the resulting changes in $I(\cpp^0_0)$ during the process.
			For each $h\ge1$, define
			\[
				\cpp^h_{2^h-1}=\cpp^0_0\cap[d^h_{2^h-1},d^h_{2^h}]
				\quad\textrm{and}\quad
				\cpp^h_i=\cpp^0_0\cap[d^h_i,d^h_{i+1})
				\quad\textmd{for}\quad i=0,\dots,2^h-2.
			\]
			For each $n\ge0$ and $i=0,\dots,2^h-1$, define
			\begin{equation}\label{def.Rni}
				R^h_i:=\delta I(\cpp^{h+1}_{2i},\cpp^{h+1}_{2i+1})= I(\cpp^h_i)-I(\cpp^{h+1}_{2i})-I(\cpp^{h+1}_{2i+1})
			\end{equation}
			where the second identity comes from the fact that $\cpp^h_i=\cpp^{h+1}_{2i}\cup \cpp^{h+1}_{2i+1}$. We verify that the random variables $\{R^h_i\}_{h,i}$ satisfy \ref{p.i}-\ref{p.iii}.

			Since $\cpp^0_0$ is a finite set, there exists a finite integer $h_0\ge1$ so that $[d^h_i,d^h_{i+1}]\cap \cpp^0_0$ contains at most one point for every $h\ge h_0$ and every $i=0,\dots,2^{h}-1$. Hence, when $h\ge h_0$, we have $I(\cpp^{h}_i)=0$ and $R^h_i=0$ for every $i$. This shows \ref{p.i}. 

			If either $\cpp^{h+1}_{2i}$ or $\cpp^{h+1}_{2i+1}$ is empty, then $R^h_i=0$ and \eqref{form.Rni} is satisfied with $s^{h,i}_j=d^h_i$ for $j=1,2,3,4$. We assume that $\cpp^{h+1}_{2i}$ and $\cpp^{h+1}_{2i+1}$ are not empty. In such case, we define 
			\[
				s^{h,i}_1=\min \cpp^{h+1}_{2i},\, s^{h,i}_2=\max \cpp^{h+1}_{2i},\, s^{h,i}_3=\min \cpp^{h+1}_{2i+1}\, \mbox{ and } \,s^{h,i}_4=\max \cpp^{h+1}_{2i+1}.
			\] Here, $\min$ (respectively $\max$) of a nonempty finite set $F$ is the smallest (respectively largest) element of $F$.  We derive \eqref{form.Rni} from \eqref{def.Rni} and the definition of $I$ at the beginning of the proof.
			Hence, \ref{p.ii} is verified.

			Lastly, to show \ref{p.iii}, we apply \eqref{def.Rni} recursively to see that
			\begin{align*}
				I(\cpp^0_0)
				&=I(\cpp^1_0)+I(\cpp^1_1)+R^0_0
				\\&=\sum_{i=0}^{2^h-1}I(\cpp^h_i)+\sum_{k=0}^{h-1}\sum_{i=0}^{2^k-1}R^k_i
				\quad\textrm{for every}\quad h\ge1.
			\end{align*}
			Since $I(\cpp^h_i)=0$ as soon as $h\ge h_0$, the previous identity implies \eqref{id.yaskov}. This completes the proof.
		\end{proof}

		\begin{lemma}\label{lem.ANA}
		  Let $A$ be the process in \cref{prop.VSSL}. Then for every $N\ge0$ and every $S\le t_0<\cdots<t_N\le T$, we have
		  \begin{equation}\label{est.AN1}
		    \left\|\E_{t_0}\left(\sum_{i=0}^{N-1}A_{t_i,t_{i+1}}-A_{t_0,t_N}\right)\right\|_{V;n}\le C_1 \Gamma_1w(t_0,t_N)^{1+\varepsilon_1}
		  \end{equation}
		  and
		  \begin{align}\label{est.AN2}
		    \Big\|\Big\|\sum_{i=0}^{N-1}A_{t_i,t_{i+1}}-A_{t_0,t_N}\Big|\cff_{t_0}\Big\|_{V;m}\Big\|_n
		    \le C_1 \Gamma_1w(t_0,t_N)^{1+\varepsilon_1}
		    + C_2 \Gamma_2w(t_0,t_N)^{\frac1\p+\varepsilon_2}
		  \end{align}
		  where $C_1=2(1- 2^{-\varepsilon_1})^{-1}$ and $C_2=2C_{m,\p,V}(1- 2^{-\varepsilon_2})^{-1}$.
		\end{lemma}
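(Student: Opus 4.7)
The plan is to combine the dyadic decomposition of \cref{lem.allocation} with an algebraic rearrangement of each $R^h_i$ so that condition \eqref{con.dA2}, which is conditional on the \emph{starting} time of $\delta A$, applies directly and so that the summation over $i$ at each fixed dyadic level $h$ possesses a clean martingale structure.

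First, apply \cref{lem.allocation} to $t_0<\cdots<t_N$ to write
\begin{equation*}
\sum_{i=0}^{N-1}A_{t_i,t_{i+1}} - A_{t_0,t_N} = \sum_{h\ge 0}\sum_{i=0}^{2^h-1} R^h_i
\end{equation*}
as a (finite) sum. Each $R^h_i$ involves four points $s_1\le s_2\le s_3\le s_4$ in $[d^h_i(t_0,t_N), d^h_{i+1}(t_0,t_N)]$, and a direct computation gives the key algebraic identity
\begin{equation*}
R^h_i = -\delta A_{s_1,s_2,s_3} - \delta A_{s_1,s_3,s_4},
\end{equation*}
so that \emph{both} Gubinelli-type increments share the common starting point $s_1$. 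Decompose $R^h_i = J^h_i + M^h_i$, where $J^h_i := -\E_{s_1}\delta A_{s_1,s_2,s_3} - \E_{s_1}\delta A_{s_1,s_3,s_4}$ absorbs the drift and $M^h_i$ is the residual, centered with respect to $\cff_{s_1}$.

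For \eqref{est.AN1}, I would take $\E_{t_0}$ termwise, use the tower property $\E_{t_0}\delta A_{s,u,t} = \E_{t_0}\E_s\delta A_{s,u,t}$ and Jensen to obtain $\|\E_{t_0}R^h_i\|_{V;n}\le 2\Gamma_1(2^{-h}w(t_0,t_N))^{1+\varepsilon_1}$ via \eqref{con.dA1} and \eqref{est.wdh}; summing the $2^h$ values of $i$ and then the geometric series in $h$ yields the stated $C_1$. For \eqref{est.AN2}, the exact same triangle-inequality argument applied to the $J^h_i$'s (together with $\|\|\cdot|\cff_{t_0}\|_{V;m}\|_n\le\|\cdot\|_{V;n}$, which follows from conditional Jensen since $n\ge m$) reproduces the $C_1\Gamma_1 w(t_0,t_N)^{1+\varepsilon_1}$ contribution.

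The heart of the matter is the $\sum_i M^h_i$ piece. By construction $\E_{s_1}M^h_i = 0$, and since $d^h_i(t_0,t_N) \le s_1$, the tower property gives $\E_{d^h_i(t_0,t_N)}M^h_i = 0$; combined with the $\cff_{d^h_{i+1}(t_0,t_N)}$-measurability of $M^h_i$, this says $\{M^h_i\}_i$ is a martingale difference sequence with respect to the dyadic filtration $\{\cff_{d^h_k(t_0,t_N)}\}_k$. Applying \eqref{ineq.cd.type} with $\cgg = \cff_{t_0}$ followed by Minkowski (exactly as in the proof of \cref{lem.cBDG}) gives
\begin{equation*}
\Big\|\Big\|\sum_{i=0}^{2^h-1}M^h_i\,\Big|\,\cff_{t_0}\Big\|_{V;m}\Big\|_n \le C_{m,\p,V}\Bigl(\sum_{i=0}^{2^h-1}\|\|M^h_i|\cff_{t_0}\|_{V;m}\|_n^\p\Bigr)^{1/\p}.
\end{equation*}
Each summand is controlled by $\|\|M^h_i|\cff_{s_1}\|_{V;m}\|_n\le 2\Gamma_2(2^{-h}w(t_0,t_N))^{1/\p+\varepsilon_2}$ from \eqref{con.dA2}, so that the exponent $1/\p$ exactly cancels the $2^{h/\p}$ factor coming from the outer $\ell^\p$-sum; the geometric sum in $h$ then produces the stated $C_2$. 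The main subtlety is spotting the algebraic rewriting with shared starting point $s_1$, which simultaneously (i) aligns the centering with \eqref{con.dA2} without any conditioning drift and (ii) produces, via the tower property, the coarser-filtration martingale structure that unlocks the martingale-type inequality.
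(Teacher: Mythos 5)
Your proof is correct and follows essentially the same route as the paper: both rely on \cref{lem.allocation}, the identity $R^h_i=-\delta A_{s_1,s_2,s_3}-\delta A_{s_1,s_3,s_4}$, estimates \eqref{est.wdh}, and a conditional martingale-type inequality summed over each dyadic level; the paper packages the Doob decomposition of $\{R^h_i\}_i$ through \cref{lem.cBDG} with the slightly finer filtration $\cgg^h_i=\cff_{s_1^{h,i}}$, whereas you split $R^h_i=J^h_i+M^h_i$ explicitly and apply \eqref{ineq.cd.type} to the resulting martingale difference sequence with respect to the coarser dyadic filtration $\{\cff_{d^h_k}\}$, which is an equivalent rearrangement yielding the same constants.
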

		\begin{proof}
		  Put $s=t_0$ and $t=t_N$ and $d^h_i=d^h_i(s,t)$. Applying \cref{lem.allocation}, we can find random variables $R^h_i$, $i=0,\cdots, 2^h-1$, $h\ge0$ which satisfy properties \ref{p.i}-\ref{p.iii} stated there. 

		  Let $h\ge0$ be fixed. Define $\cgg^h_{2^h}=\cff_t$
		  and $\cgg_i^h=\cff_{s_1^{h,i}}$ for each $i=0,\dots,2^h-1$. We recall that $s^{h,i}_1$ is defined in \ref{p.ii}. The sequence $\{\cgg_i^h\}_{i=0}^{2^h}$ forms a filtration such that $R^h_i$ is $\cgg^h_{i+1}$-measurable for every $i=0,\cdots,2^h-1$.
		  The formula \eqref{form.Rni} can be written as
		  \[
		    R^h_i=-\delta A_{s^{h,i}_1,s^{h,i}_2,s^{h,i}_3}-\delta A_{s^{h,i}_1,s^{h,i}_3,s^{h,i}_4}.
		  \]
		  Applying the conditions \eqref{con.dA1} and \eqref{con.dA2}, we obtain from the previous identity that
		  \begin{equation}\label{tmp.rni}
		   	\|\E(R^h_i|\cgg^h_i)\|_{V;n}\le 2 \Gamma_1w(d^h_i,d^h_{i+1})^{1+\varepsilon_1}
		  \end{equation}
		  and
		  \begin{equation}\label{tmp.rni2}
		   	\|\|R^h_i-\E(R^h_i|\cgg^h_i)|\cgg^h_i\|_{V;m}\|_n\le 2 \Gamma_2w(d^h_i,d^h_{i+1})^{\frac12+\varepsilon_2}.
		  \end{equation}
		  From \eqref{ineq.DBDB.center}, we have
		  \begin{multline*}
		  	\|\|\sum_{i=0}^{2^h-1}R^h_i|\cff_{t_0}\|_{V;m}\|_n\le\sum_{i=0}^{2^h-1}\|\|\E(R^h_i|\cgg^h_i)|\cff_{t_0}\|_{V;m}\|_n
		  	\\+C_{m,\p,V}\left(\sum_{i=0}^{2^h-1}\|\|R^h_i-\E(R^h_i|\cgg^h_i)|\cff_{t_0}\|_{V;m}\|_n^\p\right)^{\frac1\p}.
		  \end{multline*}		 
		  Since $m\le n$ and $\cff_{t_0}\subset\cgg^h_i$, we have
		  \begin{align*}
		  	\|\|\E(R^h_i|\cgg^h_i)|\cff_{t_0}\|_{V;m}\|_n\le\E(R^h_i|\cgg^h_i)\|_{V;n}
		  \end{align*}
		  and
		  \[
		  	\|\|R^h_i-\E(R^h_i|\cgg^h_i)|\cff_{t_0}\|_{V;m}\|_n\le \|\|R^h_i-\E(R^h_i|\cgg^h_i)|\cgg^h_i\|_{V;m}\|_n.
		  \]
		  Taking into account \eqref{tmp.rni} and \eqref{tmp.rni2}, we get
		  \begin{multline*}
		  	\|\|\sum_{i=0}^{2^h-1}R^h_i|\cff_{t_0}\|_{V;m}\|_n\le2 \Gamma_1\sum_{i=0}^{2^h-1} w(d^h_i,d^h_{i+1})^{1+\varepsilon_1} 
		  	\\+2C_{m,\p,V} \Gamma_2\left(\sum_{i=0}^{2^h-1} w(d^h_i,d^h_{i+1})^{1+\p\varepsilon_2} \right)^{\frac1\p}.
		  \end{multline*}
		  From the estimate \eqref{est.wdh}, we see that 
		  \[
		  	\sum_{i=0}^{2^h-1} w(d^h_i,d^h_{i+1})^{1+\varepsilon_1} \le \sum_{i=0}^{2^h-1}2^{-h(1+\varepsilon_1)}w(s,t)^{1+\varepsilon_1}=2^{-h \varepsilon_1}w(s,t)^{1+\varepsilon_1}
		  \]
		  and similarly
		  \[
		  	\sum_{i=0}^{2^h-1} w(d^h_i,d^h_{i+1})^{1+\p\varepsilon_2}\le 2^{-h \p\varepsilon_2}w(s,t)^{1+\p\varepsilon_2}.
		  \]
		  We combine the previous inequalities to obtain that
		  \begin{equation}
		  	\|\|\sum_{i=0}^{2^h-1}R^h_i|\cff_{t_0}\|_{V;m}\|_n\le 2^{-h \varepsilon_1} 2 \Gamma_1w(s,t)^{1+\varepsilon_1}+2^{-h \varepsilon_2}2C_{m,\p,V}\Gamma_2 w(s,t)^{\frac1\p+\varepsilon_2}.
		  \end{equation}
		  From \eqref{id.yaskov}, applying triangle inequality and the above estimate, we see that
		  \begin{align*}
		  	\|\|\sum_{i=0}^{N-1}A_{t_i,t_{i+1}}-A_{t_0,t_N}|\cff_{t_0}\|_{V;m}\|_n
		  	&\le\left(\sum_{h\ge0}2^{-h \varepsilon_1}\right) 2 \Gamma_1w(s,t)^{1+\varepsilon_1}
		  	\\&\quad+\left(\sum_{h\ge0}2^{-h \varepsilon_2}\right) 2C_{m,\p,V}\Gamma_2 w(s,t)^{\frac1\p+\varepsilon_2}.
		  \end{align*}
		  This implies  \eqref{est.AN2}.
		  To show \eqref{est.AN1}, we obtain from \eqref{id.yaskov} and triangle inequality that
		   \begin{align*}
		  	\left\|\E\left(\sum_{i=0}^{N-1}A_{t_i,t_{i+1}}-A_{t_0,t_N}\Big|\cff_{t_0}\right)\right\|_{V;n}
		  	\le \sum_{h\ge0}\sum_{i=0}^{2^h-1}\|\E(R^h_i|\cff_{t_0})\|_{V;n}.
		  \end{align*}
		  We note that $\|\E(R^h_i|\cff_{t_0})\|_{V;n}\le \|\E(R^h_i|\cgg^h_i)\|_{V;n}$. Hence we can use \eqref{tmp.rni} to estimate the above sum analogously as before. This implies \eqref{est.AN1}.		
		\end{proof}	
		\begin{proof}[Proof of \cref{prop.VSSL}] We divide the proof into two steps. In the first step, a process $\caa$ is constructed as the limit of some Riemann sums. The second step verifies the properties \ref{cl:a} and \ref{cl:b}.

			\textit{Step 1.} Let $t$ be fixed but arbitrary in $[0,T]$. We show that the Riemann sums
			\[
				A_t^\pi=\sum_{i=0}^{n-1} A_{t_i,t_{i+1}}
			\]
			over partitions $\pi=\{t_i\}_{i=0}^n$ of $[0,t]$ has a limit in $L_m(V)$, denoted by $\caa_t$, as the mesh size $|\pi|_w:=\max_iw(t_i,t_{i+1})$ shrinks to $0$. 
			Let $\pi'=\{s_i\}_{i=0}^{n'}$ be another partition of $[0,t]$ and define $\pi''=\pi\cup \pi'$. We denote the points in $\pi''$ by $\{u_i\}_{i=0}^{n''}$ with $u_0\le u_1\le\cdots\le u_{n''}$ and some integer $n''\le n+n'$.  Then we have
			\begin{align*}
				A_t^{\pi''}-A_t^\pi=\sum_{i=0}^{n-1}Z_i
				\quad\textrm{where}\quad
				Z_i=\sum_{j:t_i\le u_j<t_{i+1}}A_{u_j,u_{j+1}}-A_{t_i,t_{i+1}}.
			\end{align*}
			Applying \eqref{ineq.DBDB}, we have
			\begin{align}\label{tmp.EZZ}
				\|A_t^{\pi''}-A_t^\pi\|_{V;m} 
				&\lesssim \sum_{i=0}^{n-1}\|\E_{{t_i}}Z_i\|_{V;m}+\left(\sum_{i=0}^{n-1}\|Z_i\|_{V;m}^\p\right)^{\frac1\p}.
			\end{align}
			Lemma \ref{lem.ANA} is applied to obtain that
			\begin{align*}
				\|\E_{{t_i}}Z_i\|_{V;m}\lesssim w(t_i,t_{i+1})^{1+\varepsilon_1}
				\quad\textrm{and}\quad
				\|Z_i\|_{V;m}\lesssim w(t_i,t_{i+1})^{\frac1\p+\varepsilon_2}.
			\end{align*}
			By super-additivity of $w$, this implies that
			\[
				\sum_{i=0}^{n-1}\|\E_{{t_i}}Z_i\|_{V;m}\lesssim \sum_{i=0}^{n-1}w(t_i,t_{i+1})^{1+\varepsilon_1}\lesssim  |\pi|_w ^{\varepsilon_1}
			\]
			and similarly $\sum_{i=0}^{n-1}\|Z_i\|_{V;m}^\p\lesssim |\pi|_w^{\p\varepsilon_2}$.
			Hence, we have
			\begin{align*}
				\|A_t^\pi-A_t^{\pi''}\|_{V;m} 
				&\lesssim |\pi|_w^{\varepsilon_1}+|\pi|_w^{\varepsilon_2}.
			\end{align*}
			The same argument is applied to $A_t^{\pi'}-A_t^{\pi''}$ which gives $\|A_t^{\pi'}-A_t^{\pi''}\|_{V;m}\lesssim |\pi'|_w^{\varepsilon_1}+|\pi'|_w^{\varepsilon_2}$. Hence, by triangle inequality, 
			\begin{align*}
				\|A_t^\pi-A_t^{\pi'}\|_{V;m}\le\|A_t^\pi-A_t^{\pi''}\|_{V;m}+\|A_t^{\pi'}-A_t^{\pi''}\|_{V;m}
				\lesssim |\pi|_w^{\varepsilon_1}+|\pi|_w^{\varepsilon_2}+|\pi'|_w^{\varepsilon_1}+|\pi'|_w^{\varepsilon_2}.
			\end{align*}
			This implies that $\{A_t^\pi-A_{0,t}\}_\pi$ is Cauchy in $L_m(V)$ and hence $\caa_t:=\lim_{|\pi|\to0}A_t^\pi$ is well-defined in probability.

			\textit{Step 2.} We show that the process $(\caa_t)_{0\le t\le T}$ defined in the previous step satisfies \ref{cl:a} and \ref{cl:b}.

			The condition \eqref{con.dA2} with $s=u=t$ implies that $A_{s,s}=0$ for every $s\in[0,T]$. Hence, it is evident that $\caa_0=0$. The fact that $A$ is $\{\cff_t\}$-adapted implies that $\caa$ is $\{\cff_t\}$-adapted. Obviously, $\caa_t-A_{0,t}$, being a limit in $L_m(V)$, belongs to $L_m(V)$ for each $t$. This shows \ref{cl:a}.

			Let $(s,t)$ be fixed but arbitrary in $\Delta$. Let $\pi=\{s=t_0<\cdots<t_N=t\}$ be an arbitrary partition of $[s,t]$.
			From construction of $\caa$ in the previous step, we see that
			\begin{equation}\label{tmp.aa}
				\caa_t-\caa_s-A_{s,t}=\lim_{|\pi|_w\downarrow 0}(A^\pi_{s,t} -A_{s,t})\quad\textrm{in}\quad L_m(V).
			\end{equation}
			Hence, passing through the limit $|\pi|_w\downarrow0$ in \eqref{est.AN1} and \eqref{est.AN2}, we obtain \eqref{est.A1} and \eqref{est.A2} respectively.
		\end{proof}
		\begin{proof}[Proof of \cref{prop.rate}]
			We start from a trivial identity
			\[
				\delta\caa_{s,t}-A^\pi=\sum_{[u,v]\in\pi}\left(\delta\caa_{u,v}-A_{u,v}\right).
			\]
			Then by \eqref{ineq.DBDB}, we have
			\begin{multline*}
				\|\delta\caa_{s,t}-A^\pi\|_{V;m}\le \sum_{[u,v]\in\pi}\|\E_u(\delta\caa_{u,v}-A_{u,v})\|_{V;m}
				\\+2C_{m,\p,V}\left(\sum_{[u,v]\in\pi}\|\delta\caa_{u,v}- A_{u,v}\|_{V;m}^\p\right)^{1/\p}.
			\end{multline*}
			Using the estimate \eqref{est.A1}, the first sum on the right-hand side above is bounded above by a constant multiple of
			\begin{align*}
				\Gamma_1\sum_{[u,v]\in\pi}w(u,v) ^{1+\varepsilon_1}\le \Gamma_1|\pi|_w^{\varepsilon_1}w(s,t)\,.
			\end{align*}
			The later sum is estimated similarly, using \eqref{est.A2}. This completes the proof.
		\end{proof}
		\begin{proof}[Proof of \cref{thm.DM}]
			For every $(s,t)\in \Delta$, we define $J_{s,t}=\E_sA_{s,t}$ and $M_{s,t}=A_{s,t}-\E_sA_{s,t}$. 
			Then for every $(s,u,t)\in \Delta_2$, we have
			\begin{align*}
				\delta J_{s,u,t}=\delta A_{s,u,t}+(\E_s-\E_u)A_{u,t}
				\quad,\quad \E_s \delta J_{s,u,t}=\E_s \delta A_{s,u,t}
			\end{align*}
			and
			\begin{align*}
				\delta M_{s,u,t}=-(\E_s-\E_u)A_{u,t}
				\quad,\quad \E_s \delta M_{s,u,t}=0\,.
			\end{align*}
			Applying \cref{prop.VSSL} for $J$ and $M$, we obtain the existence of the processes $\cjj$ and $\cmm$ respectively. Since $A$ is integrable, so are $\cjj$ and $\cmm$. The estimate \eqref{est.A1} yields \eqref{est:J}.
			In addition, the estimate \eqref{est.A2} implies that $\cjj$ satisfies \eqref{est:J'} and $\cmm$ is a martingale. Since $A_{s,t}=J_{s,t}+M_{s,t}$, it is evident that $\caa=\cjj+\cmm$. Uniqueness follows from \cref{lem.uniq}.
		\end{proof}
		
\section{Additive functionals of fractional Brownian motion} 
\label{sec:additive_functionals_of_fractional_brownian_motion}
	Let $B=(B^1,\dots,B^d)$ be a fractional Brownian motion in $\Rd$ with Hurst parameter $H\in(0,1)^d$. 
	Let $f$ be a time-dependent distribution in $L^\theta([0,T];\bes^\alpha_{p,\infty}(\Rd))$ where $\theta,p, \alpha$ are some fixed parameters, $p,\theta\in(1,\infty)$ and $\alpha\in\R$.
	In the current section, we study the additive functional
	\begin{align*}
		(t,x)\mapsto \int_0^t f_r(B_r+x)dr
	\end{align*}
	as the continuous extension of the map
	\begin{align*}
		\css(\Rd)\ni f\mapsto \left((t,x)\mapsto \int_0^t f_r(B_r+x)dr\right).
	\end{align*}

	For each $i\in\{1,\dots,d\}$, $B^i$ has Mandelbrot--Van Ness representation (\cite{MR242239}) with respect to a standard two sided Wiener process $W^i$ on $\R$, namely
	\begin{equation}\label{id.Mandelbrot}
		B^i_u=\int_{-\infty}^u[(u-r)_+^{H-\frac12}-(-r)_+^{H-\frac12}]dW^i_r
	\end{equation}
	where $(x)_+=\max\{x,0\}$. We assume that $W^i$'s (and hence $B^i$'s) are independent.
	For every $i\in\{1,\dots,d\}$ and $0\le u\le v$, we have
	\begin{equation}\label{def.fbm.incr}
		B^i_v=\int_{-\infty}^u[(v-r)^{H-\frac12}-(-r)_+^{H-\frac12}]dW^i_r+\int_u^v(v-r)^{H-\frac12}dW^i_r\,.
	\end{equation}
	which also yields that $\E_uB^i_v=\int_{-\infty}^u[(v-r)^{H-\frac12}-(-r)_+^{H-\frac12}]dW^i_r$.
	We define 
	\begin{equation}\label{def.rho}
		\rho(u,v)=\E\left(\int_u^v(v-r)^{H-\frac12}dW^i_r\right)^2=\frac1{2H}(v-u)^{2H}.
	\end{equation}

	For a $d\times d$-symmetric positive definite matrix $\Sigma$, $p_\Sigma$ denotes the density of a normal random variable in $\Rd$ with mean $0$ and variance $\Sigma$, i.e.
		\[
			p_\Sigma(x)=(2 \pi)^{-\frac d2} (\det \Sigma)^{\frac12}\exp\big(-\frac12 x^* \Sigma^{-1}x\big).
		\] 
		$P_\Sigma$ denotes the spatial convolution operator  with $p_\Sigma$. The  $d\times d$ identity matrix is denoted by $I$.

		\begin{proposition}\label{prop.aprif}
			Let $p,q$ be fixed numbers in $(1,\infty)$. Let $f$ be a function in $L^\theta([0,T];\css(\Rd))$, $\theta\in(1,\infty)$. 
			Let $\gamma$ be a positive number satisfying
			\begin{equation}\label{con.fbm.tbe}
				\gamma<\frac1{\hs}\left(1-\frac1{\min(2,\theta,p,q)}\right).
			\end{equation}
			Then for any $\alpha\in\R$, $m\ge2$ and $(s,t)\in \Delta$,
			\begin{equation}\label{est.f.besov}
				\Big\|\Big\|\int_s^t f_r(B_r+\cdot)dr|\cff_s\Big\|_{\bes^{\alpha+\gamma}_{p,q};m}\Big\|_\infty
				\le C|\1_{[s,t]}f|_{L^\theta\bes^\alpha_{p,\infty}} |t-s|^{1-\hs \gamma-\frac1 \theta}.
			\end{equation}
		\end{proposition}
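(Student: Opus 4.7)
The plan is to apply the Banach-valued stochastic sewing lemma \cref{prop.VSSL} in the Besov space $V = \bes^{\alpha+\gamma}_{p,q}(\Rd)$, which has martingale type $\p = \min(2,p,q)$ by \cref{prop.ex.type}\ref{t3}, with $n=\infty$ to match the outer $L^\infty$ conditional norm. The natural germ is
\[
A_{s,t} := \int_s^t \E_s\bigl[f_r(B_r+\cdot)\bigr]\,dr = \int_s^t (P_{\rho(s,r)I} f_r)(\E_s B_r + \cdot)\,dr,
\]
the second equality following from \eqref{id.Mandelbrot}, which makes $B_r - \E_s B_r$ centered Gaussian with covariance $\rho(s,r)I_d$ and independent of $\cff_s$. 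The process $A$ is $\cff_s$-measurable, vanishes on the diagonal, and satisfies $\E_s\delta A_{s,u,t} = 0$ by the tower property, so \eqref{con.dA1} holds with $\Gamma_1 = 0$.

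Because $A_{s,t}$ is $\cff_s$-measurable, $\|A_{s,t}|\cff_s\|_{V;m} = |A_{s,t}|_V$ almost surely. The Besov norm is translation invariant in the spatial variable, and the Gaussian smoothing estimate $|P_{\sigma I} g|_{\bes^{\alpha+\gamma}_{p,q}} \le C\sigma^{-\gamma/2} |g|_{\bes^\alpha_{p,\infty}}$ (an auxiliary lemma from \cref{sec:besov_spaces}) yields
\[
|A_{s,t}|_V \le C\int_s^t (r-s)^{-\hs\gamma} |f_r|_{\bes^\alpha_{p,\infty}}\,dr \le C(t-s)^{1-\hs\gamma-1/\theta} |\1_{[s,t]}f|_{L^\theta \bes^\alpha_{p,\infty}}
\]
by H\"older's inequality in $r$, which is integrable because $\hs\gamma + 1/\theta < 1$ is implied by \eqref{con.fbm.tbe} (since $\min(2,\theta,p,q)\le\theta$).

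To bound $\|\delta A_{s,u,t}|\cff_s\|_{V;m}$, translate the integrand $(\E_s-\E_u)f_r(B_r+\cdot)$ by the $\cff_s$-measurable $\E_s B_r$ (an isometry of $V$), so it takes the form $P_{\Lambda_r I}g^u_r - \tau_{Z_r}g^u_r$ with $g^u_r = P_{\rho(u,r)I}f_r$, $\Lambda_r = \rho(s,r)-\rho(u,r)$, and $Z_r\sim\CN(0,\Lambda_r I)$ independent of $\cff_s$. Since $P_{\Lambda_r I}g^u_r = \E[\tau_{Z_r}g^u_r]$, this is centered in $Z_r$; a Besov--Gaussian moment estimate of the form $\||P_{\Lambda I} g - \tau_Z g|_V\|_m \le C\Lambda^{\delta/2} |g|_{\bes^{\alpha+\gamma+\delta}_{p,q}}$ (to be established in \cref{sec:besov_spaces}), combined with the smoothing bound on $g^u_r$ and H\"older in $r$, produces a bound of the form $\Gamma_2 w(s,t)^{1/\p+\varepsilon_2}$ for a suitable super-additive control $w$ incorporating the $L^\theta_r$-mass of $f$, with $\varepsilon_2>0$ coming from the strictness in \eqref{con.fbm.tbe}. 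Applying \cref{prop.VSSL} and combining the resulting \eqref{est.A2} with the pointwise bound on $A_{s,t}$ above yields \eqref{est.f.besov}, once one identifies $\caa_t = \int_0^t f_r(B_r+\cdot)\,dr$ via the Riemann-sum convergence $\sum_i \int_{t_i}^{t_{i+1}}\E_{t_i}[f_r(B_r+\cdot)]\,dr \to \int_0^t f_r(B_r+\cdot)\,dr$ in $L^m(V)$, which is routine for smooth $f$ by dominated convergence.

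The main obstacle is the Besov--Gaussian moment estimate: one must extract a positive power of $\Lambda_r$ (reflecting the small $\E_s$-to-$\E_u$ gap) from the centered random variable $\tau_{Z_r}g^u_r - P_{\Lambda_r I}g^u_r$ measured in $L^m(V)$, without overspending the smoothing reserve $\rho(u,r)^{-\gamma/2}$ already consumed in defining $g^u_r$. The resulting book-keeping, together with the permissible H\"older exponents in the $r$-integration, produces precisely the threshold $\gamma < \frac{1}{\hs}(1-1/\min(2,\theta,p,q))$: the $1/\theta$ contribution comes from H\"older in time, while $1/\p = 1/\min(2,p,q)$ enters through the sewing exponent $1/\p$ saturating the martingale-type property of $V$.
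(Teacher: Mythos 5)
Your germ $A_{s,t}=\int_s^t P_{\rho(s,r)I}f_r(\E_s B_r+\cdot)\,dr$ and the almost-sure bound
\[
|A_{s,t}|_{\bes^{\alpha+\gamma}_{p,q}}\lesssim \int_s^t (r-s)^{-\hs\gamma}|f_r|_{\bes^\alpha_{p,\infty}}\,dr\lesssim |\1_{[s,t]}f|_{L^\theta\bes^\alpha_{p,\infty}}|t-s|^{1-\hs\gamma-1/\theta}
\]
coincide with the paper's. The gap is how you verify condition \eqref{con.dA2}. You set out to extract a positive power of $\Lambda_r=\rho(s,r)-\rho(u,r)$ from the centred term $(\E_s-\E_u)f_r(B_r+\cdot)$ via a ``Besov--Gaussian moment estimate'' that you say is ``to be established in \cref{sec:besov_spaces}''. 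There is no such estimate there, and none is needed: each of $|A_{s,t}|_V$, $|A_{s,u}|_V$, $|A_{u,t}|_V$ obeys the same \emph{deterministic} bound almost surely (the randomness in the germ enters only as a spatial translate of a fixed function by the $\cff$-measurable $\E_\cdot B_r$, and the Besov norm is translation invariant), so by the triangle inequality and super-additivity of the control $w$ defined by $w(s,t)^{1-\hs\gamma}=|\1_{[s,t]}f|_{L^\theta\bes^\alpha_{p,\infty}}|t-s|^{1-\hs\gamma-1/\theta}$ one gets, almost surely,
\[
|\delta A_{s,u,t}|_{\bes^{\alpha+\gamma}_{p,q}}\le |A_{s,t}|_V+|A_{s,u}|_V+|A_{u,t}|_V\lesssim w(s,t)^{1-\hs\gamma},
\]
hence $\|\|\delta A_{s,u,t}|\cff_s\|_{\bes^{\alpha+\gamma}_{p,q};m}\|_\infty\lesssim w(s,t)^{1-\hs\gamma}$. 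Condition \eqref{con.fbm.tbe} says exactly $1-\hs\gamma>\max(1/2,1/\theta,1/p,1/q)$, which simultaneously makes the time integral converge (the $1/\theta$ part) and makes $1-\hs\gamma>1/\p$ with $\p=\min(2,p,q)$ (the sewing part). So \eqref{con.dA2} already holds with $\varepsilon_2=1-\hs\gamma-1/\p>0$; no cancellation in $Z_r$ is used, and your proposal replaces a one-line observation with an unproved two-sided regularity trade-off whose feasibility is never checked.

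A second, smaller gap: the identification $\caa_t=\int_0^t f_r(B_r+\cdot)\,dr$ is not ``routine by dominated convergence.'' The paper applies the uniqueness lemma \cref{lem.uniq} to $R_{s,t}:=\int_s^t f_r(B_r+\cdot)\,dr-A_{s,t}$, using $\E_s R_{s,t}=0$ and a pointwise Besov bound on $R_{s,t}$ to conclude that the Riemann sums of $R$ vanish in $L_\p(V)$; your DCT claim does not explain why the $\cff_{t_i}$-conditioning becomes negligible in the $L^m(V)$ limit.
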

		\begin{proof}
			Define $A_{s,t}(x)=\int_s^tP_{\rho(s,r)I}f_r(\E_sB_r+x)dr$. We note that the Besov norm $|\cdot|_{\bes^\alpha_{p,q}}$ is translation invariant, i.e. $|f(y+\cdot)|_{\bes^{\alpha+\gamma}_{p,q}}=|f|_{\bes^{\alpha+\gamma}_{p,q}}$ for every $y\in\Rd$. 
			Note that 
			\begin{align*}
				\wei{\rho(s,r)I\xi,\xi}\ge \frac12(r-s)^{2\hs}|\xi|^2.
			\end{align*}
			Using triangle inequality and \cref{lem.Pag}, we see that 
			\begin{align*}
				|A_{s,t}|_{\bes^{\alpha+\gamma}_{p,q}}&\le\int_s^t|P_{\rho(s,r)I}f_r(\E_sB_r+\cdot)|_{\bes^{\alpha+\gamma}_{p,q}}dr
				\\&=\int_s^t|P_{\rho(s,r)I}f_r|_{\bes^{\alpha+\gamma}_{p,q}}dr
				\lesssim\int_s^t |r-s|^{-\hs \gamma}|f_r|_{\bes^\alpha_{p,\infty}} dr.
			\end{align*}
			By H\"older inequality, we have
			\begin{align*}
				|A_{s,t}|_{\bes^{\alpha+\gamma}_{p,q}}\lesssim |\1_{[s,t]}f|_{L^\theta\bes^\alpha_{p,\infty}}|t-s|^{1-\hs \gamma-\frac1 \theta},
			\end{align*}
			where we have used \eqref{con.fbm.tbe} to ensure that the integral in time is finite. Define the continuous control $w$ by
			\begin{equation}\label{def.w.bes}
				|w(s,t)|^{1-\hs \gamma}=|\1_{[s,t]}f|_{L^\theta\bes^\alpha_{p,\infty}}|t-s|^{1-\hs \gamma-\frac1 \theta},
			\end{equation}
			so that the previous estimate yields
			\begin{align}\label{tmp.Ast111}
				|A_{s,t}|_{\bes^{\alpha+\gamma}_{p,q}}\les w(s,t)^{1-H \gamma}.
			\end{align}
			We recall from \cref{prop.ex.type} that $\bes^{\alpha+\gamma}_{p,q}$ has martingale type $\p=\min\{2,p,q\}$.		
			The condition \eqref{con.fbm.tbe} also implies that $1-\hs \gamma>\frac1\p$.
			In view of \eqref{tmp.Ast111}, condition \eqref{con.dA2} is satisfied. It is straightforward to verify that $\E_s \delta A_{s,u,t}=0$ for every $s\le u\le t$, hence condition \eqref{con.dA1} holds trivially.
			Hence, we can apply the stochastic sewing lemma, \cref{prop.VSSL}, to define the process $(\caa_t)$ as in \eqref{def.caa}. The estimates \eqref{est.A2} and \eqref{tmp.Ast111} imply that
			\begin{align*}
				\|\|\delta\caa_{s,t}|\cff_s\|_{\bes^{\alpha+\gamma}_{p,q};m}\|_\infty\les |1_{[s,t]}f|_{L^\theta\bes^\alpha_{p,\infty}}|t-s|^{1-H \gamma-\frac1 \theta}.
			\end{align*}
			Hence, to obtain \eqref{est.f.besov}, it remains to show that $\caa_t=\int_0^t f_r(B_r+\cdot)dr$.

			We put
			\begin{align*}
				R_{s,t}(x)=\int_s^tf_r(B_r+x)dr-A_{s,t}(x).
			\end{align*}
			It is evident that $\E_s R_{s,t}(x)=0$. In addition, by Minkowski inequality
			\begin{align*}
				\|\int_s^t f_r(B_r+\cdot)dr\|_{\bes^{\alpha+\gamma}_{p,q}}\le\int_s^t\|f_r\|_{\bes^{\alpha+\gamma}_{p,q}}dr.
			\end{align*}
			Combining with \eqref{tmp.Ast111} gives
			\begin{align*}
				|R_{s,t}|_{\bes^{\alpha+\gamma}_{p,q}}\les w(s,t)^{1-H \gamma}+\int_s^t\|f_r\|_{\bes^{\alpha+\gamma}_{p,q}}dr.
			\end{align*}
			Applying \cref{lem.uniq}, we have that for each $t\in[0,T]$,
			\begin{align*}
				\int_0^t f_r(B_r+\cdot)dr=\lim_{\pi\in\cpp([0,t]), |\pi|\to0}\sum_{[u,v]\in \pi}A_{u,v},
			\end{align*}
			which shows that $\caa_t=\int_0^t f_r(B_r+\cdot)dr$ and hence, completes the proof.
		\end{proof}
		For each $f\in L^\theta([0,T];\css)$, define
		\begin{align}\label{def.IfB}
			I[f]_t(x)=\int_0^tf_r(B_r+x)dr.
		\end{align}
		\cref{prop.aprif} shows that the map
		\begin{align*}
			I:L^\theta([0,T];\css\cap\bes^\alpha_{p,\infty})\to C([0,T];L_m\bes^{\alpha+\gamma}_{p,q})
		\end{align*}
		is bounded. 
		By a density argument, we can extend $I$ to $\bes^\alpha_{p,1}$.
		\begin{theorem}\label{thm.fbm.besov}
			For any $\theta,p,q\in(1,\infty)$ and $\gamma$ satisfying \eqref{con.fbm.tbe}, the map 
			\begin{align*}
				I:L^\theta([0,T];\bes^\alpha_{p,1}(\Rd))\to C([0,T];L_m\bes^{\alpha+\gamma}_{p,q}(\Rd))
			\end{align*}
			is well-defined as the unique continuous extension of \eqref{def.IfB}. In addition, one has for any $f\in\bes^\alpha_{p,1}(\Rd)$
			\begin{align}\label{est.Icont}
				\|\|\delta I[f]_{s,t}|\cff_s\|_{\bes^{\alpha+\gamma}_{p,q};m}\|_\infty\le C|\1_{[s,t]}f|_{L^\theta\bes^\alpha_{p,\infty}}	|t-s|^{1-H \gamma-\frac 1 \theta}.	
			\end{align}						
		\end{theorem}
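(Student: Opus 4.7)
The plan is to establish \cref{thm.fbm.besov} by the standard bounded-linear-extension argument built on the a priori estimate \eqref{est.f.besov} from \cref{prop.aprif}. Since the integrability indices $p$ and $1$ are both finite, $\css(\Rd)$ is dense in $\bes^\alpha_{p,1}(\Rd)$, hence $L^\theta([0,T];\css(\Rd))$ is dense in $L^\theta([0,T];\bes^\alpha_{p,1}(\Rd))$. On this dense subspace, $I$ is already defined pointwise by \eqref{def.IfB}, so it suffices to verify that $I$ is a bounded linear map from $L^\theta([0,T];\css)$, equipped with the $L^\theta\bes^\alpha_{p,1}$ topology, into the Banach space $C([0,T];L_m\bes^{\alpha+\gamma}_{p,q})$, and then invoke the density extension theorem.

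Linearity is clear. Continuity in $t$ follows from \eqref{est.f.besov} via the trivial bound $\|\cdot\|_{V;m}\le \|\|\cdot|\cff_s\|_{V;m}\|_\infty$: for $f\in L^\theta\css$,
\[
\|I[f]_t-I[f]_s\|_{\bes^{\alpha+\gamma}_{p,q};m}\le C|\1_{[s,t]}f|_{L^\theta\bes^\alpha_{p,\infty}}|t-s|^{1-H\gamma-\frac1\theta},
\]
which vanishes as $|t-s|\to 0$ since the exponent is strictly positive by \eqref{con.fbm.tbe} and the norm factor tends to zero by absolute continuity of the Lebesgue integral. Specializing to $s=0$ and using the embedding $\bes^\alpha_{p,1}\hookrightarrow\bes^\alpha_{p,\infty}$ yields
\[
\sup_{t\in[0,T]}\|I[f]_t\|_{\bes^{\alpha+\gamma}_{p,q};m}\le C|f|_{L^\theta\bes^\alpha_{p,1}}T^{1-H\gamma-\frac1\theta}.
\]
Therefore $I$ admits a unique continuous linear extension to $L^\theta([0,T];\bes^\alpha_{p,1}(\Rd))$: for $f$ in this space with any approximating sequence $(f^n)\subset L^\theta\css$ converging to $f$ in $L^\theta\bes^\alpha_{p,1}$, the same estimate applied to $f^n-f^m$ shows $(I[f^n])$ is Cauchy in $C([0,T];L_m\bes^{\alpha+\gamma}_{p,q})$, and its limit $I[f]$ is independent of the chosen sequence.

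To derive \eqref{est.Icont}, I apply \eqref{est.f.besov} to $f^n$ and pass to the limit. The right-hand side converges to $C|\1_{[s,t]}f|_{L^\theta\bes^\alpha_{p,\infty}}|t-s|^{1-H\gamma-\frac1\theta}$ by the embedding. For the left-hand side, pointwise-in-$t$ convergence $I[f^n]\to I[f]$ in $L_m\bes^{\alpha+\gamma}_{p,q}$, combined with conditional Jensen's inequality, gives $\|\delta I[f^n]_{s,t}-\delta I[f]_{s,t}|\cff_s\|_{\bes^{\alpha+\gamma}_{p,q};m}\to 0$ in $L_m(\Omega)$; extracting an a.s.-convergent subsequence and combining the a.s.\ bound available for each $f^{n_k}$ with a.s.\ convergence of the conditional moment norms yields the a.s.\ bound for $\delta I[f]$, hence the essential-supremum estimate. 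The only (minor) technical subtlety is this last passage to the limit inside the essential supremum; everything else is routine bounded-linear-extension machinery.
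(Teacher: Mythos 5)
Your proposal is correct and follows the same route as the paper: the bounded-linear-extension via density, using the a priori bound \eqref{est.f.besov} together with $\bes^\alpha_{p,1}\hookrightarrow\bes^\alpha_{p,\infty}$ and the trivial inequality $\|\cdot\|_{V;m}\le\|\|\cdot|\cff_s\|_{V;m}\|_\infty$ to define $I$ on $L^\theta\bes^\alpha_{p,1}$, then a limiting argument for \eqref{est.Icont}. The only cosmetic difference is that the paper takes the specific mollification $f^n=P_{(1/n)I}f$, which gives the monotone bound $|f^n_r|_{\bes^\alpha_{p,\infty}}\le|f_r|_{\bes^\alpha_{p,\infty}}$ so the right-hand side is dominated term by term before passing to the limit, whereas you use a generic approximating sequence and invoke convergence of the $L^\theta\bes^\alpha_{p,\infty}$ norms; you also usefully spell out the a.s.\ subsequence extraction needed to pass the essential-supremum bound to the limit, which the paper leaves implicit under ``using the continuity of $I$''.
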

		\begin{proof}
			Using the embedding $\bes^\alpha_{p,1}\hookrightarrow\bes^\alpha_{p,\infty}$ and the trivial estimate $\|\delta I[f]_{s,t}\|_{\bes^{\alpha+\gamma}_{p,q};m}\le\|\|\delta I[f]_{s,t}|\cff_s\|_{\bes^{\alpha+\gamma}_{p,q};m}\|_\infty$,  we obtain from \eqref{est.f.besov} that
			\[
				\|\delta I[f]_{s,t}\|_{\bes^{\alpha+\gamma}_{p,q};m}\le C|\1_{[s,r]}f|_{L^\theta([0,T];\bes^\alpha_{p,1})}|t-s|^{1-\hs \gamma-\frac1 \theta}.
			\]
			for every $f\in\css$. Since $\css$ is dense in $\bes^\alpha_{p,1}$ and the action $f\mapsto I[f]$ is linear, this implies that $I$ has a unique continuous extension $I:L^\theta([0,T];\bes^{\alpha}_{p,1})\to C([0,T];L_m\bes^{\alpha+\gamma}_{p,q})$.

			Moving on to \eqref{est.Icont}. Let $f$ be in $L^\theta([0,T];\bes^\alpha_{p,1})$. For each $n\ge1$, define $f^n=P_{1/nI}f$ which belongs to $L^\theta([0,T];\css)$. We have $\lim_n f_n=f$ in $L^\theta([0,T];\bes^\alpha_{p,1})$ and $|f^n_r|_{\bes^\alpha_{p,\infty}}\le|f_r|_{\bes^\alpha_{p,\infty}}\le|f_r|_{\bes^\alpha_{p,1}}$.
			Then from \eqref{est.f.besov}, we get that
			\begin{align*}
				\|\|\delta I[f^n]_{s,t}|\cff_s\|_{\bes^{\alpha+\gamma}_{p,q};m}\|_\infty
				&\le C|\1_{[s,t]}f^n|_{L^\theta\bes^\alpha_{p,\infty}} |t-s|^{1-\hs \gamma-\frac1 \theta}
				\\&\le C|\1_{[s,t]}f|_{L^\theta\bes^\alpha_{p,\infty}} |t-s|^{1-\hs \gamma-\frac1 \theta}.
			\end{align*}
			Passing through the limit $n\to\infty$, using the continuity of $I$ on $L^\theta([0,T];\bes^\alpha_{p,1})$ we obtain \eqref{est.Icont}.
		\end{proof}
		\begin{remark}
			In the case $f\in L^\infty([0,T];\bes^\alpha_{p,1}(\Rd))$ the control $w$ defined by the relation \eqref{def.w.bes} is not necessary continuous. However, for $f\in C([0,T];\bes^\alpha_{p,1}(\Rd))$, the arguments of \cref{prop.aprif} and \cref{thm.fbm.besov} are still valid by simply setting $\theta=\infty$.
		\end{remark}
		\begin{remark}
			Using Besov-Sobolev embeddings
			\begin{gather*}
				\bes^{\alpha}_{p,1}\hookrightarrow W^\alpha_p\hookrightarrow \bes^\alpha_{p,\infty}\hookrightarrow\bes^{\alpha- \varepsilon}_{p,1} \quad\textrm{for}\quad \varepsilon>0,
				\\
				\bes^\alpha_{p,q}\hookrightarrow \bes^\beta_{p_1,q_1}
				\quad\textrm{for}\quad
				\alpha-\frac dp=\beta-\frac d{p_1},\quad p\le p_1, \quad q\le q_1,
			\end{gather*}
			and the isomorphism $\bes^\alpha_{p,p}=W^\alpha_p$ when $\alpha$ is not an integer,
			\cref{thm.fbm.besov} can be applied to distributions $f$ in $L^\theta([0,T];\bes^\alpha_{p,\infty})$ and $L^\theta([0,T];W^\alpha_{p})$.
		\end{remark}
		\begin{remark}
			In view of \cref{prop.kolmo} and \eqref{est.Icont}, for each $f\in\bes^\alpha_{p,1}(\R)$, $I[f]$ has a continuous modification (as a process taking values in $\bes^{\alpha+\gamma}_{p,q}(\Rd)$).
		\end{remark}
		
		\cref{prop.rate} provides an alternative approximation for $I[f]$ by Riemann sums.
		\begin{corollary}
			Let $p,q,\theta\in(1,\infty)$, $\alpha\in\R$ and $f$ be in $L^\theta([0,T];\bes^\alpha_{p,1})$.
			Let $(s,t)\in \Delta$ and $\pi$ be a partition of $[s,t]$ and define the Riemann sum
			\begin{align*}
				I^\pi[f]_{s,t}(x)=\sum_{[u,v]\in \pi}\int_u^vP_{\rho(u,r)I}f_r(\E_uB_r+x)dr.
			\end{align*}
			Let $w$ be the control defined by the relation \eqref{def.w.bes}.
			Then for any $\gamma$ satisfying \eqref{con.fbm.tbe},
			\begin{align}
				\|\delta I[f]_{s,t}-I^\pi[f]_{s,t}\|_{\bes^{\alpha+\gamma}_{p,q}(\Rd);m}\les|\pi|_w^{1-H \gamma-\frac1{\min(2,p,q)}}w(s,t)^{\frac1{\min(2,p,q)}}
			\end{align}
		\end{corollary}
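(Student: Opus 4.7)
The plan is to apply \cref{prop.rate} directly to the two-parameter process
\[
	A_{s,t}(x) := \int_s^t P_{\rho(s,r)I}f_r(\E_s B_r + x)\,dr
\]
that was already introduced in the proof of \cref{prop.aprif}, and then to identify the resulting limiting object with $\delta I[f]_{s,t}$ and the Riemann sum from \cref{prop.rate} with $I^\pi[f]_{s,t}$.

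First I would assume $f \in L^\theta([0,T];\css(\Rd))$. From the proof of \cref{prop.aprif} we already have the key ingredients: $A_{s,t}$ takes values in $V = \bes^{\alpha+\gamma}_{p,q}(\Rd)$, which has martingale type $\p = \min(2,p,q)$ by \cref{prop.ex.type}; the estimate $|A_{s,t}|_{\bes^{\alpha+\gamma}_{p,q}} \lesssim w(s,t)^{1-H\gamma}$ together with the triangle inequality gives $\|\delta A_{s,u,t}\|_{\bes^{\alpha+\gamma}_{p,q};m} \lesssim w(s,t)^{1-H\gamma}$; and a direct computation using the tower property and the Mandelbrot--Van Ness decomposition \eqref{def.fbm.incr} shows $\E_s \delta A_{s,u,t} = 0$. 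Consequently the hypotheses of \cref{prop.VSSL} (and hence of \cref{prop.rate}) hold with $\Gamma_1 = 0$ and $\Gamma_2 \lesssim 1$, where the exponent in \eqref{con.dA2} matches $\tfrac{1}{\p}+\varepsilon_2 = 1 - H\gamma$, i.e. $\varepsilon_2 = 1-H\gamma - \tfrac{1}{\min(2,p,q)}$ (this exponent is positive by \eqref{con.fbm.tbe}).

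Plugging these values into the conclusion \eqref{est.rate} of \cref{prop.rate} yields
\[
	\|\delta\caa_{s,t} - A^\pi\|_{\bes^{\alpha+\gamma}_{p,q};m} \lesssim |\pi|_w^{1-H\gamma - \frac{1}{\min(2,p,q)}}\, w(s,t)^{\frac{1}{\min(2,p,q)}},
\]
which is exactly the claimed bound once one observes two identifications. By construction $A^\pi = \sum_{[u,v]\in \pi} A_{u,v} = I^\pi[f]_{s,t}$; and by the argument at the end of the proof of \cref{prop.aprif} (uniqueness via \cref{lem.uniq} applied to the remainder $R_{s,t}(x) = \int_s^t f_r(B_r+x)dr - A_{s,t}(x)$), we have $\caa_t = I[f]_t$, so $\delta\caa_{s,t} = \delta I[f]_{s,t}$. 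This establishes the corollary for smooth $f$.

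Finally, for general $f \in L^\theta([0,T];\bes^\alpha_{p,1})$ I would argue by density using the mollification $f^n := P_{\frac{1}{n}I}f \in L^\theta([0,T];\css)$, exactly as in the proof of \cref{thm.fbm.besov}: the smooth case gives the bound for each $f^n$ with a right-hand side controlled by $|\1_{[s,t]}f|_{L^\theta\bes^\alpha_{p,\infty}}$ (which in turn controls the control $w$ in \eqref{def.w.bes}), the Riemann sums $I^\pi[f^n]_{s,t}$ converge to $I^\pi[f]_{s,t}$ in $L_m\bes^{\alpha+\gamma}_{p,q}$ as $n\to\infty$ by linearity and the $L^\theta\bes^\alpha_{p,1}$-continuity of $I$ applied to each sub-interval, and $\delta I[f^n]_{s,t}\to\delta I[f]_{s,t}$ in the same space by \cref{thm.fbm.besov}. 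Passing $n\to\infty$ preserves the inequality. The only point requiring mild care is that the control $w$ defined by \eqref{def.w.bes} is built from $|f|_{L^\theta\bes^\alpha_{p,\infty}}$ and is the same for $f$ and its mollifications $f^n$ (up to the trivial domination $|f^n_r|_{\bes^\alpha_{p,\infty}}\le |f_r|_{\bes^\alpha_{p,\infty}}$), so the right-hand side is stable under the limit. I expect no serious obstacle here; the only tiny subtlety is verifying the convergence of the Riemann sums under mollification, which reduces to the continuity of $I$ established in \cref{thm.fbm.besov}.
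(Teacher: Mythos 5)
Your argument is correct and follows the route the paper intends with its one-line justification ``Straightforward from \cref{thm.fbm.besov} and \cref{prop.rate}'': apply \cref{prop.rate} to the germ $A_{s,t}(x)=\int_s^tP_{\rho(s,r)I}f_r(\E_sB_r+x)\,dr$ with $\Gamma_1=0$, $\p=\min(2,p,q)$ and $\varepsilon_2=1-H\gamma-1/\p$, identify $\caa$ with $I[f]$ via \cref{lem.uniq} for smooth $f$, and extend by density. The one small imprecision is your justification of $I^\pi[f^n]_{s,t}\to I^\pi[f]_{s,t}$: this does not follow from the continuity of the extension $I$ established in \cref{thm.fbm.besov}, which controls $\delta I[f]_{u,v}$ rather than the smoothed germ $A_{u,v}$; it follows instead, term by term over the finitely many intervals of the fixed $\pi$, from the Lipschitz estimate $|A_{u,v}|_{\bes^{\alpha+\gamma}_{p,q}}\lesssim|\1_{[u,v]}f|_{L^\theta\bes^\alpha_{p,\infty}}|v-u|^{1-H\gamma-1/\theta}$ in the proof of \cref{prop.aprif}, applied by linearity to $f-f^n$. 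This is a one-line repair, not a genuine gap.
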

		\begin{proof}
			Straightforward from \cref{thm.fbm.besov} and \cref{prop.rate}.
		\end{proof}

		Using embeddings between Besov spaces and H\"older--Zygmund spaces $\C^\beta$, we can derive from \cref{thm.fbm.besov} the almost sure continuity the additive functional $I[f]$.
		\begin{corollary}\label{cor.fc}
			Let $p,\theta\in(1,\infty)$, $\alpha\in\R$ and $f$ be in $L^\theta([0,T];\bes^\alpha_{p,1})$.
			\begin{enumerate}[(i)]
				\item Assume that
				\begin{equation}\label{con.fbm.holder}
					\alpha+\frac1{\hs}\left(1-\frac1{\min(2,\theta,p)}\right)>\frac dp.
				\end{equation}
				Then for every $\beta$ satisfying
				\begin{equation}\label{con.fbm.beta}
					0<\beta<\alpha-\frac dp+\frac1{\hs}\left(1-\frac1{\min(2,\theta,p)}\right),
				\end{equation}
				we have
				\begin{equation}\label{est.f.hder}
					\|\|\delta I[f]_{s,t}|\cff_s\|_{\C^\beta;m}\|_\infty\lesssim|\1_{[s,t]}f|_{L^\theta \bes^\alpha_{p,\infty}} |t-s|^{1-\hs (\beta- \alpha+\frac dp)-\frac1 \theta}
				\end{equation}
				for every $(s,t)\in \Delta$ and every $m\ge2$.
				\item Assume that 
				\begin{equation}\label{con.fbm.Lp}
					\alpha+\frac1{H}\left(1-\frac1{\min(2,\theta,p)}\right)>0.
				\end{equation}
				Then for any $v\in[p,\infty]$ satisfying
				\begin{align}\label{con.v}
					 \alpha+\frac1{H}\left(1-\frac1{\min(2,\theta,p)}\right)>\frac dp- \frac dv,
				 \end{align}
				there exists $\gamma=\gamma(v)$ satisfying \eqref{con.fbm.tbe} such that
				\begin{equation}\label{est.f.Lp}
					\|\|\delta I[f]_{s,t}|\cff_s\|_{L^v;m}\|_\infty\lesssim|\1_{[s,t]}f|_{L^\theta \bes^\alpha_{p,\infty}} |t-s|^{1-\hs \gamma -\frac1 \theta}
				\end{equation}
				for every $(s,t)\in \Delta$ and every $m\ge2$.
			\end{enumerate}
		\end{corollary}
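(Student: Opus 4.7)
Both parts reduce to \cref{thm.fbm.besov} via standard Besov embeddings, after a judicious choice of the secondary exponent $q$ and the regularization gain $\gamma$.

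For part \emph{(i)}, the plan is to invoke the Besov embedding $\bes^{\alpha+\gamma}_{p,q}(\Rd)\hookrightarrow\C^\beta(\Rd)=\bes^\beta_{\infty,\infty}(\Rd)$, which is available as soon as $\alpha+\gamma-d/p\ge\beta$ (since the target secondary exponent is $\infty$, no upper bound on $q$ is needed). I would fix any $q\ge\min(2,\theta,p)$, so that $\min(2,\theta,p,q)=\min(2,\theta,p)$, and then set $\gamma:=\beta-\alpha+d/p$. Hypothesis \eqref{con.fbm.beta} is precisely the statement $\gamma<H^{-1}(1-\min(2,\theta,p)^{-1})$, i.e.\ \eqref{con.fbm.tbe} for this $q$. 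Applying \cref{thm.fbm.besov} and composing with the embedding then yields \eqref{est.f.hder}, since the exponent in time becomes $1-H\gamma-1/\theta=1-H(\beta-\alpha+d/p)-1/\theta$.

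For part \emph{(ii)}, I would use the embedding $\bes^{\alpha+\gamma}_{p,q}(\Rd)\hookrightarrow L^v(\Rd)$, which is valid whenever $p\le v$ and $\alpha+\gamma>d/p-d/v$ (for instance by factoring as $\bes^{\alpha+\gamma}_{p,q}\hookrightarrow\bes^\varepsilon_{v,q}\hookrightarrow L^v$ for some small $\varepsilon>0$, using that $\bes^\varepsilon_{v,q}\hookrightarrow L^v$ for any $q\in[1,\infty]$). Fix again $q\ge\min(2,\theta,p)$ so that the upper bound in \eqref{con.fbm.tbe} simplifies to $H^{-1}(1-\min(2,\theta,p)^{-1})$. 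Hypothesis \eqref{con.v} then asserts exactly that the open interval $\bigl(d/p-d/v-\alpha,\ H^{-1}(1-\min(2,\theta,p)^{-1})\bigr)$ is non-empty, so I can pick $\gamma=\gamma(v)$ inside it. With this choice, \cref{thm.fbm.besov} combined with the embedding delivers \eqref{est.f.Lp}.

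The proof is essentially a routine marriage of \cref{thm.fbm.besov} with Besov embedding theory, so no serious obstacle is expected. The only mild point to watch is that $q$ must be chosen large enough to absorb the $\min$ appearing in \eqref{con.fbm.tbe}; this is unproblematic because the embeddings into $\C^\beta$ and into $L^v$ impose no upper constraint on $q$. Hypothesis \eqref{con.fbm.holder} (respectively \eqref{con.fbm.Lp}) appears simply as the compatibility condition ensuring that the admissible range of $\beta$ (respectively of $v$) is non-empty.
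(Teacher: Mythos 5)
Your proof is correct and follows essentially the same route as the paper: define $\gamma=\beta-\alpha+d/p$ (resp.\ pick $\gamma$ in the non-empty interval $(d/p-d/v-\alpha,\,H^{-1}(1-1/\min(2,\theta,p)))$), apply \cref{thm.fbm.besov}, and compose with the corresponding Besov embedding into $\C^\beta$ (resp.\ into $L^v$). The only cosmetic difference is that you allow any $q\ge\min(2,\theta,p)$ to kill the $\min(2,\theta,p,q)$ in \eqref{con.fbm.tbe}, whereas the paper simply fixes $q=p$; both choices yield the same effective constraint and the same conclusion.
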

		\begin{proof} 
			(i) Assume that $\beta$ satisfy \eqref{con.fbm.beta} and define $\gamma$ by the relation $\beta=\gamma+\alpha-\frac dp$. Then $\gamma$ satisfies the condition \eqref{con.fbm.tbe} and we have the embedding $\bes^{\alpha+\gamma}_{p,p}\hookrightarrow \C^\beta$. From \cref{thm.fbm.besov}, we deduce that $I[f]_t$ belongs to $\C^\beta$ and the estimate \eqref{est.f.hder} follows from \eqref{est.f.besov}.

			(ii) From the conditions \eqref{con.fbm.Lp} and \eqref{con.v}, we can choose $\gamma$ satisfying  \eqref{con.fbm.tbe} (with $q=p$)  such that $\gamma+\alpha>\frac dp-\frac dv$. It suffices to apply \cref{thm.fbm.besov} and the embeddings $\bes^{\gamma+\alpha}_{p,p}\hookrightarrow\bes^\varepsilon_{v,v}\hookrightarrow L^v(\Rd)$, $\gamma+\alpha-\frac dp=\varepsilon-\frac dv$.
		\end{proof}
		\begin{remark}
			We do not require $\beta\in(0,1)$ in \cref{cor.fc}(i). This means that when $k:=\alpha-\frac dp+\frac1\hs\left(1-\frac1{\min(2,\theta,p)}\right)$ is larger than $1$, the functional $I[f]_t$ is $n$-times differentiable in the spatial variables for any integer $n<k$. 
		\end{remark}
		The class $ L^\theta([0,T];\bes^0_{1,\infty}(\Rd))$ contains Dirac distributions and the corresponding functional $I$ is directly related to the local time of fractional Brownian motion. 
		For this class,
		\cref{thm.fbm.besov} is still applicable through the Besov embedding $\bes^0_{1,\infty}\hookrightarrow\bes^{-d/2}_{2,2}$. 
		\begin{corollary}\label{cor.flp}
			Let $f$ be a distribution in $ L^\theta([0,T];\bes^0_{1,\infty}(\Rd))$ with $\theta\ge2$.
			
			(i) (Small $H$) When $0< Hd<\frac12$, for every $t\in[0,T]$,  $I[f]_t$ belongs to $L^u(\Rd)$ almost surely for every $v\in[2,\infty]$. 

			(ii) (Large $H$) When $\frac12\le Hd<1$, for every $t\in[0,T]$, $I[f]_t$ belongs to $L^v(\Rd)$ almost surely for every $v\in[2,\frac{2Hd}{2Hd-1})$. Here we use the convention that $\frac{2Hd}{2Hd-1}=\infty$ if $Hd=\frac12$.
		\end{corollary}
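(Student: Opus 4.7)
The strategy is to reduce both cases to \cref{cor.fc}\,(ii) by means of the strict Besov embedding
\[
  \bes^0_{1,\infty}(\Rd)\hookrightarrow \bes^{-d/2-\varepsilon}_{2,1}(\Rd),\qquad \varepsilon>0,
\]
which follows from the scaling-strict Sobolev--Besov embedding since $0-d/1=-d>-d-\varepsilon=-d/2-\varepsilon-d/2$ and $1\le 2$ (the $q$-indices play no role when the scaling inequality is strict). This places $f\in L^\theta([0,T];\bes^{-d/2-\varepsilon}_{2,1}(\Rd))$ with $|f|_{L^\theta\bes^{-d/2-\varepsilon}_{2,1}}\lesssim |f|_{L^\theta\bes^0_{1,\infty}}$ for every $\varepsilon>0$.

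Next I apply \cref{cor.fc}\,(ii) with $p=2$ and $\alpha=-d/2-\varepsilon$. Since $\theta\ge 2$, $\min(2,\theta,p)=2$, so the quantity entering \eqref{con.fbm.Lp} and \eqref{con.v} is
\[
  \alpha+\tfrac1H\lt(1-\tfrac1{\min(2,\theta,p)}\rt)=\tfrac{1-Hd}{2H}-\varepsilon,
\]
which is positive for $\varepsilon$ small since $Hd<1$, so \eqref{con.fbm.Lp} is satisfied. Condition \eqref{con.v} then becomes
\[
  \tfrac1v>1-\tfrac1{2Hd}+\tfrac{\varepsilon}{d}.
\]

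Case (i), $0<Hd<\tfrac12$: the right-hand side is negative once $\varepsilon<d/(2Hd)-d$, so every $v\in[2,\infty]$ (including $v=\infty$, corresponding to $1/v=0$) satisfies the condition. \cref{cor.fc}\,(ii) then furnishes $\gamma$ satisfying \eqref{con.fbm.tbe} together with an $L_m$-estimate on $|I[f]_t|_{L^v(\Rd)}$, from which a.s.\ finiteness follows by Markov's inequality.

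Case (ii), $\tfrac12\le Hd<1$: for any $v<\frac{2Hd}{2Hd-1}$ (using the convention that this equals $\infty$ at $Hd=\tfrac12$) one has $\tfrac1v>1-\tfrac1{2Hd}$, so choosing $\varepsilon>0$ small enough that $\tfrac1v>1-\tfrac1{2Hd}+\tfrac\varepsilon d$ still holds, \cref{cor.fc}\,(ii) again yields $I[f]_t\in L^v(\Rd)$ with finite $m$-th moment, hence a.s.\ finite. The only subtlety is choosing $\varepsilon$ depending on the target $v$ in each regime; the expected main obstacle is bookkeeping the boundary cases $Hd=\tfrac12$ and $v=\infty$, which are resolved by the extended-real convention in the statement.
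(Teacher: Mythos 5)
Your proof is correct and follows essentially the same route as the paper: both reduce to \cref{cor.fc}\,(ii) by embedding $\bes^0_{1,\infty}$ into a Besov space with integrability index $p=2$ and applying the exponent bookkeeping with $\min(2,\theta,2)=2$. If anything, your explicit $\varepsilon$-loss embedding $\bes^0_{1,\infty}\hookrightarrow\bes^{-d/2-\varepsilon}_{2,1}$ is a bit more careful than the paper's one-line proof, which quotes $\bes^0_{1,\infty}\hookrightarrow\bes^{-d/2}_{2,2}$ whereas the standard scaling embedding only gives $\bes^{-d/2}_{2,\infty}$; a small loss in the smoothness index is therefore needed to land in the $q=1$ space required by \cref{thm.fbm.besov}, and this is exactly what you do, and it is harmless because the conditions \eqref{con.fbm.Lp}--\eqref{con.v} are strict inequalities.
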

		\begin{proof}
			Straightforward from the embedding $\bes^0_{1,\infty}\hookrightarrow\bes^{-d/2}_{2,2}$ and \cref{cor.fc}. 	
		\end{proof}

\appendix
\section{A continuity criterion with controls} 
\label{sec:a_continuity_criterion_with_controls}
	We give an extension of the classical Kolmogorov continuity theorem with generic controls.
	\begin{proposition}\label{prop.kolmo}
		Let $V$ be a Banach space, $w$ be a continuous control which is $w$ strictly increasing, i.e. $w(u,v)<w(s,t)$ whenever $[u,v]\subsetneq[s,t]$. Let $m\ge1$ and $\alpha\in(0,1]$ be such that $\beta_0:=\alpha-1/m>0$. Let $\caa$ be a $V$-valued process such that
		\begin{align}\label{con.caama}
			\|\delta \caa_{s,t}\|_{V;m}\le w(s,t)^{\alpha} \quad \forall(s,t)\in \Delta.
		\end{align}
		Then $\caa$ has continuous modification $\tilde\caa$ and for every $\beta\in(0,\beta_0)$, there is a finite constant $C(\beta, \beta_0)$ such that
		\begin{align}\label{est.kol}
			\left\|\sup_{(s,t)\in \Delta,s<t} \frac{|\delta\tilde\caa_{s,t}|_V}{w(s,t)^\beta}\right\|_m\le C(\beta,\beta_0) w(0,T)^{\alpha- \beta}.
		\end{align}
	\end{proposition}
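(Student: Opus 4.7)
The plan is to mirror the classical Kolmogorov continuity argument, replacing standard dyadic subdivisions by the $w$-dyadic points $D^n_w(0,T)=\{d^n_i(0,T)\}_{i=0}^{2^n}$ introduced in Section~\ref{sec:stochastic_sewing_lemma}. Set $D:=\bigcup_{n\ge0}D^n_w(0,T)$. The first preliminary observation is that the strict monotonicity of $w$ combined with continuity forces $w(u_k,v_k)\to 0$ to be equivalent to $v_k-u_k\to 0$; together with \eqref{est.wdh}, this shows that the Euclidean mesh of $D^n_w(0,T)$ tends to $0$, so $D$ is dense in $[0,T]$.

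Next I introduce the level-$n$ maxima $K_n:=\max_{0\le i<2^n}|\delta\caa_{d^n_i,d^n_{i+1}}|_V$ and apply the crude union bound $\|K_n\|_m^m\le\sum_i\|\delta\caa_{d^n_i,d^n_{i+1}}\|_{V;m}^m$ with the hypothesis \eqref{con.caama} and \eqref{est.wdh} to deduce $\|K_n\|_m\le 2^{-n\beta_0}\,w(0,T)^{\alpha}$. The heart of the proof is the chaining step: for any $s<t$ in $D$, pick the integer $n_0\ge 0$ with $2^{-(n_0+1)}w(0,T)<w(s,t)\le 2^{-n_0}w(0,T)$ (possible by super-additivity and strict monotonicity of $w$), and join $s$ to $t$ by a telescoping chain of $w$-dyadic neighbours at refinement levels $\ge n_0$. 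Since the $w$-dyadic hierarchy is a nested binary tree of the same combinatorial shape as the classical dyadic hierarchy, the standard binary-expansion argument yields a pointwise estimate of the form $|\delta\caa_{s,t}|_V\le 2\sum_{k\ge n_0}K_k$. Dividing by $w(s,t)^\beta$, applying Minkowski's inequality, and exchanging the order of summation then gives
\[
  \Big\|\sup_{\substack{s,t\in D\\ s<t}}\frac{|\delta\caa_{s,t}|_V}{w(s,t)^\beta}\Big\|_m
  \le C\,w(0,T)^{-\beta}\sum_{k\ge 0}2^{k\beta}\|K_k\|_m
  \le C(\beta,\beta_0)\,w(0,T)^{\alpha-\beta},
\]
the last series converging precisely because $\beta<\beta_0$.

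It remains to promote this estimate from $D$ to all of $[0,T]$. The bound above shows that on a full-measure event the map $s\mapsto\caa_s$ is $w$-H\"older of exponent $\beta$ on the dense set $D$, hence uniformly continuous on $D$, and therefore admits a unique continuous extension $\tilde\caa$ to $[0,T]$; the hypothesis \eqref{con.caama} provides $L_m$-continuity of $\caa$, so $\tilde\caa_t=\caa_t$ almost surely for every $t\in[0,T]$, yielding the required modification. The estimate \eqref{est.kol} for $\tilde\caa$ follows from the bound on $D$ by continuity. The main nontrivial step is the chaining, but its combinatorics carry over verbatim from the classical Kolmogorov proof because the $w$-dyadic intervals form a nested binary tree exactly like the usual dyadic intervals; only the mesh estimate \eqref{est.wdh} has to replace the standard length bound.
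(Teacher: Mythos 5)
Your overall strategy is the same as the paper's: $w$-dyadic points, a chaining argument, and moment estimates on level maxima. The gap is in the chaining step, and it is caused by your choice of $K_n$ as the maximum over \emph{adjacent} $w$-dyadic pairs $(d^n_i,d^n_{i+1})$ only. In the classical proof, after choosing $n_0$ with $2^{-n_0-1}w(0,T)<w(s,t)\le 2^{-n_0}w(0,T)$, the chain decomposes $\delta\caa_{s,t}$ into side jumps (each controlled by some $K_k$, $k>n_0$, because the successive approximants $s_k,s_{k+1}$ are equal or adjacent at level $k+1$) plus a single middle jump $\delta\caa_{s_{n_0},t_{n_0}}$. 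For standard dyadics this middle jump is $\le K_{n_0}$ because $|t-s|\le 2^{-n_0}$ forces $s_{n_0}$ and $t_{n_0}$ to be equal or adjacent. For $w$-dyadic points this metric inference fails: \eqref{est.wdh} is only an \emph{upper} bound, there is no lower bound on $w(d^h_i,d^h_{i+1})$ (the right half at each $w$-bisection may carry an arbitrarily small share of the control), so a set of $w$-measure $\le 2^{-n_0}w(0,T)$ may contain several level-$n_0$ intervals. Concretely, take $w(s,t)=(t-s)^2$ on $[0,1]$: the level-$2$ points are $0,\ \tfrac12,\ \tfrac1{\sqrt2},\ \approx 0.914,\ 1$, and $w(d^2_1,d^2_4)=\tfrac14=2^{-2}w(0,1)$ even though $d^2_1$ and $d^2_4$ are three intervals apart. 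The middle jump for this pair is not controlled by your adjacent-pair $K_2$, and in general there is no uniform bound on how many level-$n_0$ intervals such a pair may span. Your assertion that the argument ``carries over verbatim because the $w$-dyadic hierarchy is a nested binary tree of the same combinatorial shape'' misses that the obstruction is metric (the $w$-sizes of intervals at a given level are not all $\approx 2^{-n}w(0,T)$), not combinatorial; consequently the claimed bound $|\delta\caa_{s,t}|_V\le 2\sum_{k\ge n_0}K_k$ does not follow.

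The paper anticipates exactly this and defines $K_h$ as the supremum of $|\delta\caa_{s,t}|_V$ over \emph{all} pairs $s\le t$ in $D^h$ with $w(s,t)\le 2^{1-h}$, rather than just adjacent ones, and it chooses $s_n$ (infimum of level-$n$ dyadic points $\ge s$) and $t_n$ (supremum of level-$n$ dyadic points $\le t$) asymmetrically so that the middle-jump pair is guaranteed to fall within this wider scope, including the degenerate case $t_h<s_h$, which it treats separately. That broader definition of $K_h$, and the accompanying reworking of the moment bound to account for the enlarged family of pairs, is the ingredient that your proof is missing; without it the chaining step does not close.
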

	\begin{proof}
		The proof is similar to the standard Kolmogorov continuity theorem (\cite[pg. 26]{MR1725357}) with some minor modifications to replace the standard dyadic points by the dyadic points with respect to the control $w$ (defined in \cref{sub:proofs}).

		Without loss of generality, we assume that $w(0,T)=1$.
		Recall the definition of $d^h_i(0,T)$ from \cref{sub:proofs}.
		Let $d^h_i=d^h_i(0,T)$, $D^h=\{d^h_i\}_{i=0}^{2^h-1}$ and $D=\cup_{h\ge0} D^h$. Because of the monotonicity of $w$, we  have $d^h_i<d^h_{i+1}$ and that $D$ is a dense subset of $[0,T]$.   Define
		\[
			K_h=\sup_{s,t\in D^{h}:s\le t, w(s,t)\le 2^{1-h}}|\caa_t-\caa_s|_V, \quad h\ge0,
		\]
		and recall that $\beta_0= \alpha-1/m>0$.
		Then by \eqref{con.caama},
		\begin{align*}
		 	\E |K_h|^m\le\sum_{s,t\in D^{h}: s\le t,w(s,t)\le 2^{1-h}}\E|\delta\caa_{s,t}|_V^m\le 2^{h+1}2^{(1-h) m \alpha}=2^{1+m \alpha} 2^{-h m \beta_0}.
		\end{align*}

		Let $s,t$ be in $D$, $s<t$. For each $n$, define
		\begin{align*}
			s_n=\inf\{r\in D^n:r\ge s\}
			\tand t_n=\sup\{r\in D^n:r\le t\}.
		\end{align*}
		It is straightforward to see that $(s_n)_n$ is decreasing and $s_n=s$ for some $n$ on; $(t_n)_n$ is increasing and $t_n=t$ for some $n$ on; and for every $n$
		\begin{align*}
			w(s_{n+1},s_n)\le2^{-n}, \quad w(t_n,t_{n+1})\le 2^{-n}.
		\end{align*}
		To see this, let $s_n'$ be the dyadic point adjacent to $s_n$ to the left, i.e. $s_n'=\max\{r\in D^n:r<s_n\}$. Then we have $s'_n\le s\le s_n$, so that $w(s_{n+1},s_n)\le w(s,s_n)\le w(s'_n,s_n)\le 2^{-n}$ by \eqref{est.wsut}. Similarly, let $t_n'$ be the dyadic point adjacent to $t_n$ to the right. One has $t_n\le t\le t_n'$, which implies the estimate for $w(t_n,t_{n+1})$.

		If $h\ge0$ is an integer satisfying $w(s,t)\le 2^{-h}$, then we have additionally that
		\begin{align*}
			\1_{(s_h\le t_h)} w(s_h,t_h)+\1_{(t_h\le s_h)} w(t_h,s_h)\le2^{-h}.
		\end{align*}
		Indeed, if $s_h\le t_h$, then $[s_h,t_h]\subset [s,t]$ and we have $w(s_h,t_h)\le w(s,t)\le 2^{-h}$.  If $t_h< s_h$ then one has $t_h\le s\le t\le s_h$. In this case, we must have the identity $t_h=s_h'$, where $s_h'$ is defined previously (namely, the dyadic point adjacent to $s_h$ to the left). Then $w(t_h,s_h)=w(s_h',s_h)\le 2^{-h}$ by \eqref{est.wsut}.

		Let $s,t$ be in $D$, $s<t$ and $w(s,t)\le 2^{-h}$. We have
		\begin{align*}
			\delta\caa_{s,t}=\delta\caa_{s_h,t_h}+ \sum_{i=h}^\infty \delta\caa_{s_i,s_{i+1}}+\sum_{i=h}^\infty \delta\caa_{t_i,t_{i+1}},
		\end{align*}
		where the series are actually finite sums. From the definition of $K_n$ and properties of $s_n,t_n$ described previously, it follows that
		\begin{align*}
			|\delta\caa_{s,t}|_V\le2 K_{h}+2\sum_{i=h}^\infty K_{i+1}\le 2\sum_{i=h}^\infty K_i.
		\end{align*}
		Consequently, setting $M_\beta=\sup\{|\delta\caa_{s,t}|_V/w(s,t)^\beta;s,t\in D,s< t\}$ for $\beta\in(0,\beta_0)$, we have
		\begin{align*}
			M_\beta&\le\sup_{h\ge0}\sup_{s,t\in D: s<t, w(s,t)\le 2^{-h}}2^{(h+1)\beta}|\delta\caa_{s,t}|_V
			\\&\le \sup_{h\ge0} 2^{1+(h+1)\beta}\sum_{i=h}^\infty K_i
			\le 2^{1+\beta}\sum_{i=0}^\infty 2^{i \beta} K_i.
		\end{align*}
		It follows that
		\begin{align*}
			\|M_\beta\|_m
			\le 2^{1+\beta}\sum_{i=0}^\infty2^{i \beta}\|K_i\|_{m}
			\le 2^{3+\beta}\sum_{i=0}^\infty2^{i(\beta- \beta_0)}<\infty.
		\end{align*}
		In particular, for a.e. $\omega$, $\caa_\cdot(\omega)$ is uniformly continuous on $D$ and it makes sense to define for every $t\in[0,T]$,
		\begin{align*}
		 	\tilde{\caa}_t(\omega)=\lim_{s\to t, s\in D}\caa_s(\omega).
		\end{align*}
		It is now standard to verify that $\tilde \caa$ is the desired modification and that \eqref{est.kol} holds with the constant $C(\beta,\beta_0)=2^{3+\beta}\sum_{i=0}^\infty2^{i(\beta- \beta_0)}$.
	\end{proof}

\section{Auxiliary estimates} 
\label{sec:besov_spaces}
		To obtain various properties of Besov spaces, we will make use of the following Bernstein's inequalities. 
		Let $f$ be a function in $L^p(\Rd)$ and let $q\ge p$, $p,q\in[1,\infty]$. For every integer $k\ge0$, every $\lambda>0$ and $t>0$ we have (\cite[Lemma 2.1]{bahouri})
		\begin{align}
			&\supp F f\subset \lambda\cbb \Rightarrow \|\nabla^k f\|_{L^q(\Rd)}\le C^{k+1}\lambda^{k+d(\frac1p-\frac1q)}\|f\|_{L^p(\Rd)},
			\label{est.bernB}
		\end{align}
		where $F f$ denotes the Fourier transform of $f$ and $\cbb$ is a ball centered at $0$ in $\Rd$.

		\begin{lemma}
			Let $\cnn$ be an annulus. Let $a$ be a $d\times d$-matrix such that
			\[
				\wei{a \xi,\xi}\ge \kappa_1|\xi|^2
			\] 
			for some $\kappa_1\in(0,M)$.
			Then there exist positive constants $c,C=C(d,M)$ such that for any $\lambda>0$, $p\in[1,\infty]$ and any function $g$ whose Fourier transform is supported in $\lambda\cnn$
			\begin{equation}\label{est.Bernheat}
				|P_a g|_{L^p}\le C e^{-c \kappa_1 \lambda^2} |g|_{L^p}.
			\end{equation}		
		\end{lemma}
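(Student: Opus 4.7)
The plan is to reduce to a convolution estimate via a Fourier cut-off and Young's inequality, then to exploit the compact support of the cut-off together with the lower bound $\langle a\xi,\xi\rangle\ge\kappa_1|\xi|^2$ to extract the exponential decay in $\lambda^2$.

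First, choose a smooth bump function $\chi\in C_c^\infty(\Rd)$ with $\chi\equiv1$ on $\cnn$ and $\supp\chi\subset\widetilde{\cnn}$, a slightly larger annulus bounded away from the origin, say $|\eta|\ge c_0>0$ on $\supp\chi$. Since $F g$ is supported in $\lambda\cnn$, we have $F(P_a g)(\xi)=\hat p_a(\xi)\chi(\xi/\lambda)F g(\xi)$, with $\hat p_a(\xi)=e^{-\frac12\langle a\xi,\xi\rangle}$. Writing $P_a g=K_\lambda*g$ where $K_\lambda=F^{-1}[\hat p_a(\cdot)\chi(\cdot/\lambda)]$, Young's inequality gives $|P_ag|_{L^p}\le|K_\lambda|_{L^1}|g|_{L^p}$, so the task reduces to proving $|K_\lambda|_{L^1}\le Ce^{-c\kappa_1\lambda^2}$.

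Next, a change of variables $\xi=\lambda\eta$ shows that $|K_\lambda|_{L^1}=|F^{-1}\phi_\lambda|_{L^1}$, where
\begin{equation*}
	\phi_\lambda(\eta):=e^{-\frac{\lambda^2}{2}\langle a\eta,\eta\rangle}\chi(\eta).
\end{equation*}
Apply the standard Schwartz-class trick: for any integer $N>d/2$,
\begin{equation*}
	(1+|x|^2)^{N/2}|F^{-1}\phi_\lambda(x)|\lesssim\sum_{|\alpha|\le N}|\partial^\alpha\phi_\lambda|_{L^1},
\end{equation*}
and therefore $|F^{-1}\phi_\lambda|_{L^1}\lesssim\max_{|\alpha|\le N}|\partial^\alpha\phi_\lambda|_{L^1}$. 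By the Leibniz rule, each $\partial^\alpha\phi_\lambda$ is a sum of terms of the form $Q_\alpha(\lambda^2 a\eta)\,e^{-\frac{\lambda^2}{2}\langle a\eta,\eta\rangle}\,\partial^{\alpha-\beta}\chi(\eta)$ where $Q_\alpha$ is a polynomial of degree $\le|\alpha|$ in its argument. Since $|a|\le M$ and $|\eta|$ is bounded on $\supp\chi$, the prefactor is bounded by some polynomial $P_N(\lambda)$.

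Finally, on $\supp\chi$ we have $\langle a\eta,\eta\rangle\ge\kappa_1|\eta|^2\ge\kappa_1 c_0^2$, whence
\begin{equation*}
	|\partial^\alpha\phi_\lambda|_{L^1}\lesssim P_N(\lambda)\,e^{-\frac{\kappa_1 c_0^2}{2}\lambda^2}|\widetilde{\cnn}|.
\end{equation*}
Choosing any $c\in(0,c_0^2/2)$, the polynomial factor is absorbed into the exponential (for small $\lambda$ the desired bound is trivial up to enlarging $C$), giving the required estimate with a constant depending only on $d$ and $M$. The main mild obstacle is merely arranging the constants so that $C$ and $c$ depend only on $d$ and $M$ rather than on $\kappa_1$, which is achieved by the very reduction above: the bound $\langle a\eta,\eta\rangle\ge\kappa_1|\eta|^2$ enters the proof only once, in the final exponent, while the polynomial factors depend only on $|a|\le M$ and the fixed geometry of $\cnn$ and $\chi$.
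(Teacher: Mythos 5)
Your overall plan — reduce to an $L^1$ bound on the convolution kernel via Young, pass to rescaled variables, and use the Schwartz-tail/Leibniz trick — is the standard route (it is essentially how the cited \cite[Lemma 2.4]{bahouri} is proved for the scalar case $a=tI$). But the final absorption step is where the argument breaks: it does not produce a constant depending only on $d$ and $M$, and that uniformity in $\kappa_1$ is precisely what the lemma asserts and what the application in \cref{lem.Pag} requires (there one takes $\sup_{\kappa>0}\kappa^{\gamma/2}\sum_{j\ge 0}e^{-2^{2j}\kappa}2^{\gamma j}$, which needs the constant in \eqref{est.Bernheat} to be $\kappa$-free).

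Concretely, after the Leibniz expansion you obtain a bound of the form $|\partial^\alpha\phi_\lambda|_{L^1}\lesssim_{d,M,N}(1+\lambda^2)^N\,e^{-\frac{\kappa_1 c_0^2}{2}\lambda^2}$. To get $Ce^{-c\kappa_1\lambda^2}$ you must absorb $(1+\lambda^2)^N$ into $e^{-(c_0^2/2-c)\kappa_1\lambda^2}$, and
\[
\sup_{\lambda>0}(1+\lambda^2)^N e^{-(c_0^2/2-c)\kappa_1\lambda^2}\;\sim\;\kappa_1^{-N}\quad\text{as }\kappa_1\to 0,
\]
so the resulting $C$ blows up as $\kappa_1\downarrow 0$. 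The trouble is exactly that the polynomial produced by differentiating $e^{-\frac{\lambda^2}{2}\langle a\eta,\eta\rangle}$ grows in $\lambda^2$ at a rate governed by the \emph{upper} bound $|a|\le M$, while the exponential decays at a rate governed by the \emph{lower} bound $\kappa_1$; these two scales decouple as $\kappa_1\to 0$, and your final remark (``the bound $\langle a\eta,\eta\rangle\ge\kappa_1|\eta|^2$ enters only once, in the final exponent'') is where the uniformity is lost rather than gained.

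The clean fix is to split off the worst part of $a$ before ever taking Fourier transforms. Since $\langle a\eta,\eta\rangle\ge\kappa_1|\eta|^2$, the symmetric matrix $a-\kappa_1 I$ is positive semi-definite, so $p_{a-\kappa_1 I}$ is a (possibly degenerate) Gaussian probability measure and $P_{a-\kappa_1 I}$ is an $L^p$-contraction that does not enlarge Fourier support. Writing $P_a=P_{\kappa_1 I}\circ P_{a-\kappa_1 I}$ and applying \cite[Lemma 2.4]{bahouri} (the scalar heat-semigroup case, with $t=\kappa_1/2$) to $P_{\kappa_1 I}$ gives
\[
|P_a g|_{L^p}\le |P_{\kappa_1 I}(P_{a-\kappa_1 I}g)|_{L^p}\le C e^{-c\kappa_1\lambda^2}|P_{a-\kappa_1 I}g|_{L^p}\le C e^{-c\kappa_1\lambda^2}|g|_{L^p},
\]
with $C,c$ depending only on $d$ and the annulus $\cnn$. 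In the scalar case the polynomial-absorption trick you invoke does work, because there the ``$t$'' in the polynomial and the ``$t$'' in the exponential are the same variable; the reduction above arranges exactly that before you differentiate.
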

		\begin{proof}
			\cite[Lemma 2.4]{bahouri}.
		\end{proof}
		\begin{lemma}\label{lem.Pag}
			For $\gamma>0$, $(p,q)\in[1,\infty]^2$,
			$|P_ag|_{\bes^{\alpha+\gamma}_{p,q}}\lesssim (1+\kappa^{-\gamma/2})|g|_{\bes^\alpha_{p,\infty}} $.
		\end{lemma}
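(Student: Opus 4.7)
The plan is to exploit the Littlewood--Paley characterization of Besov norms together with the Gaussian smoothing estimate \eqref{est.Bernheat} from the preceding lemma. Let $\{\Delta_j\}_{j\ge -1}$ be the standard Littlewood--Paley projectors, so that for $j\ge 0$ the function $\Delta_j g$ has Fourier support in a dyadic annulus $2^j\cnn$, while $\Delta_{-1}g$ has Fourier support in a fixed ball. Since $P_a$ is a Fourier multiplier it commutes with each $\Delta_j$, hence $\Delta_j P_a g = P_a\Delta_j g$.

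For $j\ge 0$ I would apply \eqref{est.Bernheat} directly to $\Delta_j g$ (with $\lambda=2^j$) to get
\[
|\Delta_j P_a g|_{L^p}\le C\,e^{-c\kappa 2^{2j}}|\Delta_j g|_{L^p},
\]
and combine it with the defining bound $|\Delta_j g|_{L^p}\le 2^{-j\alpha}|g|_{\bes^\alpha_{p,\infty}}$ to obtain
\[
2^{j(\alpha+\gamma)}|\Delta_j P_a g|_{L^p}\lesssim 2^{j\gamma}e^{-c\kappa 2^{2j}}|g|_{\bes^\alpha_{p,\infty}}.
\]
For $j=-1$ the operator $P_a$ is a contraction on $L^p$ since $p_a$ is a probability density, so $|\Delta_{-1}P_a g|_{L^p}\le|\Delta_{-1}g|_{L^p}\lesssim|g|_{\bes^\alpha_{p,\infty}}$.

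The analytic core is then to control the factor arising from the $\ell^q$-summation. Comparing the sum with its integral counterpart and substituting $y=2^{2x}$ yields
\[
\sum_{j\ge 0} 2^{jq\gamma}e^{-cq\kappa 2^{2j}}\lesssim \int_1^\infty y^{q\gamma/2-1}e^{-cq\kappa y}\,dy.
\]
Splitting into the regimes $\kappa\gtrsim 1$ (where the right-hand side is bounded by a constant) and $\kappa\lesssim 1$ (where the rescaling $z=\kappa y$ reduces the problem to a Gamma integral, producing the factor $\kappa^{-q\gamma/2}$), one concludes that this quantity is at most a constant multiple of $1+\kappa^{-q\gamma/2}$.

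Assembling the pieces,
\[
|P_a g|_{\bes^{\alpha+\gamma}_{p,q}}=\Bigl(\sum_{j\ge -1}2^{jq(\alpha+\gamma)}|\Delta_j P_a g|_{L^p}^q\Bigr)^{1/q}\lesssim (1+\kappa^{-\gamma/2})|g|_{\bes^\alpha_{p,\infty}},
\]
with the obvious modification (supremum in place of sum) when $q=\infty$. There is no essential difficulty here; the only points to monitor are that $\Delta_j$ and $P_a$ genuinely commute, and that the implicit constants depend only on $d$ and on the fixed annulus/ball used in defining the Littlewood--Paley decomposition, not on $\alpha,p,q$.
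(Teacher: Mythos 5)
Your argument is correct and follows essentially the same route as the paper: Littlewood--Paley decomposition, commutation of $P_a$ with the blocks $\Delta_j$, the Gaussian decay estimate \eqref{est.Bernheat} for $j\ge0$, and an $L^p$-contraction bound for the low-frequency block. The only difference is cosmetic: the paper first reduces to $q=1$ via the embedding $\bes^{\alpha+\gamma}_{p,1}\hookrightarrow\bes^{\alpha+\gamma}_{p,q}$ and then uses $\sup_{\kappa>0}\kappa^{\gamma/2}\sum_{j\ge0}2^{\gamma j}e^{-2^{2j}\kappa}<\infty$, whereas you carry general $q$ through and control the $\ell^q$-sum directly by an integral comparison.
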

		\begin{proof}		
			From the embedding $\bes^{\alpha+\gamma}_{p,1}\hookrightarrow\bes^{\alpha+\gamma}_{p,q}$, it suffices to consider the case $q=1$.
			We denote by $\Delta_j$, $j\ge-1$, the (nonhomogeneous) Littlewood-Paley blocks (\cite{bahouri}*{page 61}).
			We have from \eqref{est.Bernheat} and \eqref{est.bernB}, for $j\ge0$,
			\begin{align*}
				|P_a(\Delta_jg)|_{L^p}\les e^{-2^{2j} \kappa}|\Delta_j g|_{L^p}\les e^{-2^{2j} \kappa}2^{- \alpha j}|g|_{\bes^\alpha_{p,\infty}}.
			\end{align*}
			Noting that  $\Delta_j P_ag=P_a (\Delta_jg)$, we obtain
			\begin{align*}
				2^{j(\alpha+\gamma)}|\Delta_j(P_ag)|_{L^p}\les|g|_{\bes^\alpha_{p,\infty}}e^{-2^{2j} \kappa}2^{ \gamma j}.
			\end{align*}
			Since $\gamma>0$, it is easy to check that
			\begin{align*}
				\sup_{\kappa>0}\kappa^{\frac \gamma2} \sum_{j\ge0}e^{-2^{2j} \kappa}2^{\gamma j}<\infty
			\end{align*}
			which implies that
			\begin{align*}
				\sum_{j\ge0}|\Delta_j (P_ag)|_{L^p}\les |g|_{\bes^\alpha_{p,\infty}}\kappa^{-\frac \gamma2}.
			\end{align*}
			For $j=-1$, we have
			\begin{align*}
				|\Delta_{-1} (P_ag)|_{L^p}=|P_a(\Delta_{-1}g)|_{L^p}\le |\Delta_{-1}g|_{L^p}\les |g|_{\bes^\alpha_{p,\infty}}.
			\end{align*}
			Combining the previous two estimates, we obtain the result.		
		\end{proof}
	\paragraph{\bf Acknowledgment} 
	\label{par:acknowledgment}
		I thank Pavel Zorin-Kranich for various discussions about \cref{prop.ex.type}.
	\paragraph{\bf Funding statement} 
	\label{par:funding_statement_}
		The research is supported partly by the Alexander von Humboldt Foundation and the European Research Council under the European Union’s Horizon 2020 research and innovation program (grant agreement No. 683164).
		Supports from the Hausdorff Research Institute for Mathematics (HIM), where the research was initiated during the junior trimester program in 2019, are  acknowledged.
\bibliographystyle{plain}
\bibliography{../bibliography/processes}

\end{document}